\newtheorem{theorem}{Theorem}[section]
\newtheorem{proposition}[theorem]{Proposition}
\newtheorem{lemma}[theorem]{Lemma}
\newtheorem{corollary}[theorem]{Corollary}
\theoremstyle{definition}
\newtheorem{definition}[theorem]{Definition}
\newtheorem{example}[theorem]{Example}
\newtheorem{Remark}[theorem]{Remark}
\newtheorem{Question}[theorem]{Question}
\newcommand{\B}{\mathbb{B}}
\newcommand{\N}{\mathbb{N}}
\newcommand{\C}{\mathbb{C}}
\newcommand{\D}{\mathbb{D}}
\newcommand{\R}{\mathbb{R}} 
\newcommand{\T}{\mathbb{T}}
\newcommand{\PSH}{\operatorname{PSH}}
\newcommand{\revi}[1]{{\color{black}{#1}}}
\DeclareMathOperator{\isdef}{\overset{def}{=}}
\begin{document}
\title[Boundary Pluripolar Hulls]{ Characterizations of Boundary Pluripolar Hulls}

\author{Ibrahim K. Djire }
\address{Jagiellonian University, Department of Mathematics}
\email{Ibrahim.Djire@im.uj.edu.pl}

\author{ Jan Wiegerinck  }
\address{Korteweg--de Vries Institute, Universiteit van Amsterdam, Science Park  105-107, Amsterdam} 
\email{J.J.O.O.Wiegerinck@uva.nl}

\keywords{ Plurisubharmonic functions, Pluripotential theory, Pluripolar sets B-regular domains}
 \subjclass[2010]{32U05}
\maketitle

\begin{abstract}
We present some basic properties of the so called boundary relative extremal function and shed some light on Sadullaev's question about behavior of different kinds of extremal functions. We introduce and discuss \emph{boundary pluripolar} sets and   \emph{boundary pluripolar hulls}.   For B-regular domains the  boundary pluripolar hull is always trivial on the boundary of the domain. We present a ``boundary version''  of Zeriahi's theorem on the completeness of pluripolar sets.
\end{abstract}

\section{Introduction}

\revi{Boundary behavior of analytic functions in one or several complex variables is a classical subject, starting with the work of Fatou,  and the literature on it is so vast that it seems justifiable to omit references. The boundary behavior of harmonic and subharmonic functions is also classical and well understood, e.,g., ~\cite{ArGa}. The boundary behavior of plurisubharmonic functions, however, is less well understood. In this paper we mainly study properties of the \emph{boundary extremal function} $\omega(z,A, D)$, which is a generalization of the classical notion of harmonic measure. }

Throughout the paper $D$ will denote \revi{a bounded} domain in $\C^n$, $\PSH(D)$ the family of all plurisubharmonic functions on $D$, and $A$ a subset in the boundary of $D$.  For any function $u: D\rightarrow \R\cup \{-\infty\}$ and $x \in \overline D$  set   

$$u^*(x)=\limsup_{z\rightarrow x, z\in D} u(z)=\lim_{r\rightarrow 0}\sup_{B(x,r)\cap D } u,$$ 
the upper semicontinuous regularization of $u$ on $\overline D$.
We let $\D$ be the unit disc, $\T$ the unit circle and $\B$ the unit ball in $\C^2$.

Siciak, cf.~\cite{Si}, introduced the \emph{relative extremal  function} $\omega^*$, where $\omega$ is defined as follows. Given an open set  $D$ in $\C^n$ and a compact subset $E$ of $D$  $$\omega(z,E,D)=\sup\{u(z); u\in \PSH(D), u\leq 0, u\leq-1 \text{ on } E \}, \quad z\in D.$$ 

 \revi{Siciak's definition} also makes sense for subsets $A$ of $\partial D$. For $z\in D$ one defines, cf. \cite{Sa, Po, EdSi1} \revi{(note that  \cite{EdSi1} appeared as \cite{EdSi}, but in this latter paper there is little reference left to the boundary extremal function)},
 $$\omega(z,A,D)=\sup\{u(z):  u\in \PSH(D), u\le 0,  u^*\leq -1 \text{  on  } A \}.$$ 
 We  will  call $\omega^*(.,A,D)$ the \emph{boundary relative extremal function}. It is a special case of the (regularization of) the Perron-Bremermann function,\revi{ hence is always maximal in $D$, cf. \cite{Sa}.}
For a bounded function $f$ on $\partial D$ the Perron\revi{-}Bremermann function $u_f$ is defined as 
$$u_f=\sup\{v\in \PSH(D), v^*\leq f \text{ on } \partial D\}.$$

\revi{In \cite{Sa},  Sadullaev gave different versions of $\omega$. We will touch upon this in Section 2, where  we will study $\omega(.,A,D)$ somewhat further and give some additional properties and applications of it. 
}

\smallskip
   Following Sibony, cf. \cite{Sib}, we will say that a bounded domain $D\subset\C^n$ is \emph{B-regular} if every $f\in C(\partial D)$ can be extended  to a plurisubharmonic function on $D$ that is continuous on $\overline D$. In  \cite{Sib} it is proved that the following statements are  equivalent:
\begin{itemize}
 \item $D$ is B-regular;
 \item For $z\in\partial D$ there is $u\in \PSH(D)\cap C(\overline D)$ such that $u(z)=0$ and $u<0$ on $\overline D\setminus\{z\}$;
 \item There is $u\in \PSH(D)\cap C(\overline D)$ such that $\lim_{z\rightarrow\partial D} u(z)=0$ and $z\mapsto u(z)-|z|^2\in \PSH(D)$.
\end{itemize}
For  a B-regular domain $D$ \revi{and $f\in C(\partial D)$} we have $u_f\in \PSH(D)\cap C(\overline D)$ and $u_f=f$ on $\partial D$, cf. \cite{BeTa1}.

 For $A\subset \partial D,$ it can happen that any $u\in \PSH(D)$ such that $u^*|_A=-\infty$ assumes the value $-\infty$ automatically on a bigger set in $\overline D$. For instance, set $\B=\{(z_1,z_2)\in \C^2, |z_1^2|+|z_2^2|<1\}$. Let $A_1\subset\T$ be the closure of a half-circle. Set $A=A_1\times\{0\}$. Any $u\in \PSH(\B)$ such that $u^*\equiv -\infty$ on $A$ is identically $-\infty$ in $\{z\in\C, |z|<1\}\times \{0\}$. The phenomenon is similar to the occurrence of pluripolar hull \revi{$\hat E_D$ of a pluripolar subset $E$ of a domain $D$ in $\C^n$. This notion was introduced by Zeriahi in \cite{Ze}, and is defined as follows
\[\hat E_D=\{z\in D: \forall u\in \PSH(D)\ u|_E=-\infty \Rightarrow u(z)=-\infty\}.\]}
 
 We will call a subset $A\in \partial D$  \emph{b-pluripolar} (boundary pluripolar) if there exists  a $u\in \PSH(D),$ $u\leq 0$, $u\not\equiv -\infty$, such that $A\subset\{u^*=-\infty\}$  and  we will call a subset $A\in \partial D$ \emph{ completely b-pluripolar}  if there exists  a $u\in \PSH(D),$ $u< 0$,\quad  $u\not\equiv -\infty,$ such that $\{z\in \partial D,\quad u^*(z)=-\infty \}=A$. 
  Zeriahi showed in \cite{Ze} that if $E\subset D$ is pluripolar and an $F_\sigma$ as well as a $G_\delta$, then $E$ is \revi{completely pluripolar, i.e., there exists $u\in\PSH(D)$ with $E=\{z\in D: u(z)=-\infty\}$,}  if and only if $E$ coincides with its pluripolar hull. We will define the \emph{boundary pluripolar hull} in Definition \ref{Def3.2} and employ  $\omega(.,A,D)$ to describe this in Section 3 and 4. In Section 4 we will give a boundary version of Zeriahi's theorem. The proof is close to Zeriahi's.
We will  show  that for B-regular domains the b-pluripolar hull $\hat A\subset \overline D$ of a b-pluripolar set $A$ is contained in $A\cup D$. It is perhaps mildly surprizing that no hull is picked up at the boundary. In particular we have Corollary \ref{cor4.4} that for B-regular domains every b-pluripolar set that is $G_\delta$ as well as $F_\sigma$, is completely b-pluripolar.\\
  
In his thesis, \cite{Wi}, Wikstr\"om considered the function $V\in \PSH(\B)$:
$$V(z)= \log\frac{|z_2|^2}{1-|z_1|^2}$$
and observed that $V|_{\{z_2=0\}}=-\infty$ inside
 $\B$, but $V^*(z_1, 0)=0$ for $|z_1|=1$,\revi{ indeed, $V^*=0$ on all of $\partial \B$, }cf. \cite{Wi1}, Example 5.5. 
This example suggested to us that something like Corollary \ref{cor4.4} could hold.

\smallskip\noindent
\emph{Acknowledgement.} 
The first author is supported by the international PhD programme "Geometry and Topology in Physical Models" of the Foundation for Polish Science and he wishes to thank  Professor Armen Edigarian for his help to accomplish this work. \revi{The authors are grateful to an anonimous  referee for his suggestions  and comments, which have substantially improved the paper.}

\section{\revi{Properties of $\omega$}}

\begin{proposition}\label{prop2.1}
If $ A_1\subset A_2\subset\partial{D}$, then $$\omega(.,A_2,D) \leq \omega(.,A_1,D).$$
If $D_1\subset D_2$ and $A\subset \partial D_1\cap \partial D_2,$ then  on $D_1$ we have $$\omega(.,A,D_2) \leq \omega(.,A,D_1).$$
\end{proposition}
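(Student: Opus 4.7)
The plan is to prove both inequalities by tracking containments between the defining classes of admissible plurisubharmonic functions in the definition of $\omega$ and then taking suprema; no heavier pluripotential tool is needed.

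For the first statement, I would simply observe that if $A_1\subset A_2$, then the condition ``$u^*\leq -1$ on $A_2$'' is strictly stronger than ``$u^*\leq -1$ on $A_1$.'' Consequently every $u$ admissible in the defining family for $\omega(\cdot,A_2,D)$ is also admissible in that for $\omega(\cdot,A_1,D)$, so the supremum defining $\omega(\cdot,A_2,D)$ is taken over a subclass and is pointwise no larger.

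For the second statement I would fix an arbitrary candidate $u\in\PSH(D_2)$ with $u\leq 0$ and $u^*|_A\leq -1$, and restrict it to $D_1$. Clearly $u|_{D_1}\in\PSH(D_1)$ and $u|_{D_1}\leq 0$. The one point deserving care is the direction of the inequality between the two upper semicontinuous regularizations: for $x\in A\subset \partial D_1\cap\partial D_2$, since $D_1\subset D_2$ the approach set defining the regularization relative to $D_1$ is contained in that defining the regularization relative to $D_2$, so
$$\limsup_{z\rightarrow x,\, z\in D_1} u(z) \;\leq\; \limsup_{z\rightarrow x,\, z\in D_2} u(z) \;\leq\; -1.$$
Thus $u|_{D_1}$ is admissible in the defining family for $\omega(\cdot,A,D_1)$, whence $u(z)\leq \omega(z,A,D_1)$ for every $z\in D_1$. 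Taking the supremum over all admissible $u$ on $D_2$ yields the claimed inequality on $D_1$.

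I do not anticipate any real obstacle; the only subtlety worth stating explicitly is the monotonicity of the regularization with respect to the domain of approach, which is precisely what makes restriction to a smaller subdomain preserve the boundary majorization imposed on $A$.
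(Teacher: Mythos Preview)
Your argument is correct. The paper states this proposition without proof, treating both monotonicity properties as immediate from the definition; your write-up is exactly the standard verification, including the one point that actually needs a word of justification, namely that passing to a smaller domain shrinks the approach set and hence does not increase the $\limsup$ defining $u^*$ at points of $A$.
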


\begin{proposition} \label{prop2.7} Let $D\subset\C^n$ be a bounded domain and $A\subset\partial D$. 
Then
 $$\omega(.,A,D)=\sup_{A\subset V,\ V\text{\ open\ }} \omega(.,V,D),$$
and there is a non-increasing sequence $(V_j)_j$ of open neighborhoods of $A$ in $\partial D$ such that  
\begin{equation}\label{eq2.7}
 \lim_{j\rightarrow\infty}\omega(z,V_j,D)=  \omega(z,A,D),\quad \text{almost everywhere}.\end{equation}
\begin{proof}
Because of Proposition \ref{prop2.1} it suffices to prove the existence of a sequence $V_j$ such that \eqref{eq2.7} holds.
By Choquet's lemma there is an increasing sequence $(u_j)_j\subset \PSH(D)$ so that $u_j<0$ on $D$, $u^*_j\leq-1$ on $A$ and $$\lim_{j\rightarrow\infty} u_j=\omega(.,A,D),\quad \text{almost everywhere}.$$ 
Set $V_1=\{z\in\partial D, u^*_1-1<-1\}$ and $V_j=\{z\in\partial D, u^*_j-1/j<-1\}\cap V_{j-1}$ for $j>1$.
Obviously $u_j-1/j\leq\omega(.,V_j,D)$ for all $j$ hence we have (almost everywhere) 
$$\omega(.,A,D)= \lim_{j\rightarrow\infty}u_j \leq  \lim_{j\rightarrow\infty}  \omega(.,V_j,D)\leq \omega(.,A,D).$$
\end{proof}
\end{proposition}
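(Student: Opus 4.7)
The plan is to deduce everything from the sequential statement. The inequality $\omega(\cdot,V,D)\le\omega(\cdot,A,D)$ for every open $V\supset A$ is immediate from Proposition~\ref{prop2.1}, so the supremum on the right is bounded above by $\omega(\cdot,A,D)$. It therefore suffices to exhibit one non-increasing sequence $(V_j)$ of open neighborhoods of $A$ in $\partial D$ satisfying \eqref{eq2.7}; the equality in the supremum formula then follows by squeezing.

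First I would apply Choquet's lemma to the defining family for $\omega(\cdot,A,D)$ to extract an increasing sequence $(u_j)\subset\PSH(D)$ with $u_j\le 0$, $u_j^*\le -1$ on $A$, and $\lim_j u_j=\omega(\cdot,A,D)$ almost everywhere. The construction of the $V_j$'s uses upper semicontinuity of each $u_j^*$ on $\overline D$: the sublevel set $W_j\isdef\{z\in\overline D:u_j^*(z)<-1+1/j\}$ is then open in $\overline D$ and contains $A$, since $u_j^*\le -1<-1+1/j$ there. Setting $V_j\isdef(W_j\cap\partial D)\cap V_{j-1}$, with $V_0\isdef\partial D$, yields a non-increasing sequence of relatively open neighborhoods of $A$ in $\partial D$.

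The decisive verification is that $u_j-1/j$ is admissible for $\omega(\cdot,V_j,D)$: it lies in $\PSH(D)$, is at most $-1/j<0$ on $D$, and for $z\in V_j$ one has
$$(u_j-1/j)^*(z)=u_j^*(z)-1/j<(-1+1/j)-1/j=-1,$$
so the boundary condition $(u_j-1/j)^*\le -1$ on $V_j$ is satisfied. Hence $u_j-1/j\le\omega(\cdot,V_j,D)\le\omega(\cdot,A,D)$ pointwise, and letting $j\to\infty$ sandwiches $\lim_j\omega(\cdot,V_j,D)$ between $\lim_j u_j=\omega(\cdot,A,D)$ and $\omega(\cdot,A,D)$ almost everywhere, giving \eqref{eq2.7}. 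The main point to watch is that the constraint on $V_j\subset\partial D$ is phrased in terms of the regularization $u_j^*$, not the pointwise values of $u_j$; the slack $1/j$ is precisely what converts the strict inequality $u_j^*<-1+1/j$ used to define $V_j$ into the admissible bound $(u_j-1/j)^*\le -1$ needed to count $u_j-1/j$ as a competitor, so the proof would fail without that extra margin.
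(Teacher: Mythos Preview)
Your proof is correct and follows essentially the same approach as the paper's: Choquet's lemma produces the increasing sequence $(u_j)$, the open neighborhoods $V_j$ are the sublevel sets $\{u_j^*<-1+1/j\}\cap\partial D$ intersected with $V_{j-1}$, and the sandwich $u_j-1/j\le\omega(\cdot,V_j,D)\le\omega(\cdot,A,D)$ yields \eqref{eq2.7}. Your write-up is in fact slightly more careful than the paper's in spelling out why $V_j$ is open (upper semicontinuity of $u_j^*$) and why the slack $1/j$ is exactly what is needed to make $u_j-1/j$ admissible on $V_j$.
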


\begin{proposition}\label{prop2.13}
Let $D$ be a domain in $\C^n,$ and let $A_1\supset A_2\supset A_3\supset \cdots $ be a sequence of compact subsets of $\partial D$ and $A=\cap_{j=1}^{\infty} A_j$.
Then at each point in $D$ $$\lim_{j\rightarrow\infty}\omega(.,A_j,D)=\omega(.,A,D),$$ 
\end{proposition}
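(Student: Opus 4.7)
The plan is to prove equality by two inequalities. For the easy direction $\lim_j \omega(\cdot, A_j, D) \leq \omega(\cdot, A, D)$, note that since $A \subset A_j$ and $A_{j+1} \subset A_j$, Proposition \ref{prop2.1} gives both $\omega(\cdot, A_j, D) \leq \omega(\cdot, A, D)$ and that the sequence $(\omega(\cdot, A_j, D))_j$ is non-decreasing, so the pointwise limit $\phi := \lim_j \omega(\cdot, A_j, D)$ exists and satisfies $\phi \leq \omega(\cdot, A, D)$.

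For the reverse inequality, fix $z_0 \in D$ and take any $u \in \PSH(D)$ with $u \leq 0$ on $D$ and $u^* \leq -1$ on $A$. The regularization $u^*$ is upper semicontinuous on $\overline D$, so for each $\epsilon \in (0,1)$ the set
$$W_\epsilon := \{w \in \partial D : u^*(w) < -1 + \epsilon\}$$
is open in $\partial D$ and contains $A$. Since the $A_j$ are compact, decreasing, and have intersection $A$, a standard finite intersection argument applied to the nested compact sets $A_j \setminus W_\epsilon \subset A_1$ produces a $j_\epsilon$ such that $A_j \subset W_\epsilon$ for every $j \geq j_\epsilon$.

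Next, I would rescale. The function $v := u/(1-\epsilon)$ lies in $\PSH(D)$ and satisfies $v \leq 0$ on $D$, while on $W_\epsilon$ we have
$$v^*(w) = \frac{u^*(w)}{1-\epsilon} < \frac{-1+\epsilon}{1-\epsilon} = -1.$$
Hence $v$ is admissible for $A_j$ once $j \geq j_\epsilon$, giving $\omega(z_0, A_j, D) \geq v(z_0) = u(z_0)/(1-\epsilon)$. Letting $j \to \infty$ yields $\phi(z_0) \geq u(z_0)/(1-\epsilon)$; then letting $\epsilon \to 0^+$ gives $\phi(z_0) \geq u(z_0)$; and taking the supremum over all admissible $u$ delivers $\phi(z_0) \geq \omega(z_0, A, D)$.

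The main obstacle I anticipate is that one cannot directly test $u$ against $A_j$: admissibility for $A_j$ demands $u^* \leq -1$ on all of $A_j$, whereas the hypothesis only gives this on $A$. The rescaling $u \mapsto u/(1-\epsilon)$ together with the upper semicontinuity of $u^*$ on $\overline D$ (producing the open buffer $W_\epsilon$) is precisely what upgrades the strict bound $u^* < -1+\epsilon$ on $W_\epsilon$ to $v^* \leq -1$ on $A_j$, at the cost of only a factor $1/(1-\epsilon)$ in the value at $z_0$, which vanishes in the limit.
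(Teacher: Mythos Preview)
Your proof is correct and follows essentially the same approach as the paper: upper semicontinuity of $u^*$ yields an open neighborhood of $A$ on which $u^*<-1+\epsilon$, compactness forces $A_j$ into that neighborhood for large $j$, and a small perturbation of $u$ becomes admissible for $A_j$. The only cosmetic difference is that the paper perturbs by subtracting $\epsilon$ (using $u-\epsilon$) rather than rescaling by $1/(1-\epsilon)$, which makes the final limit step slightly cleaner but is otherwise equivalent.
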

\begin{proof}
Clearly, $\omega(.,A_1,D)\leq \omega(.,A_2,D)\leq \dots $  hence the limit exists. Take a negative function $v\in \PSH(D)$ such that $v^*|A\leq-1.$ As the set $V=\{z\in D: v(z)-\epsilon<-1\}$ is  open and $A$ is compact,  we can find an open set $U$ containing $A$ such that $U\cap D\subset V$. There exists $j_0$ such that  for each $j\geq j_0,$ $A_j\subset U.$ Therefore $v-\epsilon \leq \omega(.,A_j,D)$ for $j\geq j_0$. As a consequence, $v-\epsilon\leq \lim_{j\rightarrow\infty}\omega(.,A_j,D),$ and so $\omega(.,A,D)-\epsilon\leq \lim_{j\rightarrow\infty}\omega(.,A_j,D),$ this for all $\epsilon>0.$ The opposite inequality is trivial. 
\end{proof}

Edigarian and Sigurdsson, \cite{EdSi}, define a domain $D\subset\C^n$ to be \emph{weakly regular} if for every relatively open subset $U$ of $\partial D$ we have
\[\omega^*(.,U,D)\le -\chi_U\quad\text{on $\partial D$},\]
where $\chi_U$ is the characteristic function of $U$. They show that if  $D$ is Dirichlet regular in $\R^{2n}$ then $D$ is weakly regular. In particular hyperconvex domain are weakly regular. 

Note that if $D$ is weakly regular and $U$ is relatively open in $\partial D$, then $\omega(.,U, D)=\omega^*(., U, D)$, because $\omega^*(.,U, D)$ belongs to the family defining $\omega(., U, D)$. In particular $\omega^*(., U, D)$ is plurisubharmonic.


\begin{proposition}\label{prop2.3}
Let  $D\subset\C^n$ be weakly regular and $A\subset\partial D$. Then for all $x$ in the interior of $A$ we have  $$ \lim_{y\rightarrow x, y\in D}\omega^*(y,A,D)=-1.$$
\end{proposition}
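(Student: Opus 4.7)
The plan is to reduce the statement to a relatively open neighborhood of $x$ contained in $A$, where weak regularity gives an upper bound at $x$, and then squeeze using Proposition~\ref{prop2.1} together with the trivial lower bound $\omega \ge -1$.

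First I would pick a relatively open $U \subset \partial D$ with $x \in U \subset A$, which exists because $x$ lies in the (relative) interior of $A$ in $\partial D$. Applying Proposition~\ref{prop2.1} to $U \subset A$ gives $\omega(\cdot, A, D) \le \omega(\cdot, U, D)$ on $D$, and the same for upper semicontinuous regularizations. Since the constant function $-1$ belongs to the defining family of $\omega(\cdot, A, D)$ (indeed $-1 \le 0$ and $-1 \le -1$ on $A$), we also have the uniform lower bound $\omega^*(\cdot, A, D) \ge -1$, and likewise for $U$. Hence on $D$
\[
-1 \le \omega^*(y, A, D) \le \omega^*(y, U, D).
\]

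Next I would bring in weak regularity. Since $U$ is relatively open in $\partial D$, we have $\omega^*(\cdot, U, D) \le -\chi_U$ on $\partial D$; because $x \in U$, this reads $\omega^*(x, U, D) \le -1$. Unwinding the definition $u^*(x) = \limsup_{y \to x,\, y \in D} u(y)$ used in the paper, this is exactly
\[
\limsup_{y \to x,\, y \in D} \omega^*(y, U, D) \le -1.
\]
Combined with the pointwise lower bound $\omega^*(\cdot, U, D) \ge -1$, the liminf is also $\ge -1$, so in fact $\lim_{y \to x,\, y \in D} \omega^*(y, U, D) = -1$. The sandwich displayed above then forces $\lim_{y \to x,\, y \in D} \omega^*(y, A, D) = -1$, as required.

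The substantive step is the upgrade from a $\limsup$ inequality to a two-sided limit, and this only works because the lower bound $-1$ is uniform on $D$; the rest is the monotonicity of Proposition~\ref{prop2.1} and the observation that the constant $-1$ is admissible in the defining family. I do not anticipate a genuine obstacle, since everything hinges on spotting that weak regularity can be applied to any relatively open $U \subseteq A$ containing $x$, not to $A$ itself.
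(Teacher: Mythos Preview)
Your argument is correct and is essentially the paper's own proof: pick a relatively open $U$ with $x\in U\subset A$ (the paper simply takes $U=A^{o}$), sandwich $-1\le\omega^*(\cdot,A,D)\le\omega^*(\cdot,U,D)$ via Proposition~\ref{prop2.1}, and apply weak regularity at $x\in U$. The only point worth making explicit is that your ``unwinding'' step tacitly uses $\omega(\cdot,U,D)=\omega^*(\cdot,U,D)$ on $D$ for open $U$, which the paper records just before the proposition as a consequence of weak regularity; otherwise the definition of $u^*(x)$ gives $\limsup_{y\to x}\omega(y,U,D)\le -1$ rather than $\limsup_{y\to x}\omega^*(y,U,D)\le -1$.
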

\begin{proof}  Let $U=A^o$. Then for $y\in U$ we find
\[-1\le \omega^*(y, A, D)\le  \omega^*(y,U,D)\le -\chi_U(y)=-1.\]
\end{proof}

\begin{corollary}\label{cor2.4}
If $D\subset \C^n$ is a weakly regular domain and $A\subset \partial D$ is open, then $$\omega(.,A,D)=\omega^*(.,A,D).$$
Hence $\omega(.A,D)\in\PSH(D)$.
\end{corollary}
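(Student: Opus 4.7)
The plan is to distill the remark already made just before Proposition~\ref{prop2.3}: once $D$ is weakly regular and $A\subset\partial D$ is (relatively) open, the function $\omega^*(\cdot,A,D)$ is itself an admissible competitor in the supremum defining $\omega(\cdot,A,D)$. The corollary then collapses to a double inequality. So I would proceed in three short steps.

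First, I would check that $\omega^*(\cdot,A,D)\in\PSH(D)$ and $\omega^*(\cdot,A,D)\le 0$ on $D$. These are routine facts about upper semicontinuous regularizations of suprema of plurisubharmonic families: every member $u$ of the defining family of $\omega(\cdot,A,D)$ satisfies $u\le 0$, so the pointwise supremum is $\le 0$, and the standard Choquet/Brelot--Cartan theorem on psh regularization gives that $\omega^*(\cdot,A,D)$ is plurisubharmonic on $D$.

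Second, I would verify the boundary condition $\bigl(\omega^*(\cdot,A,D)\bigr)^{*}\le -1$ on $A$. Since $\omega^*(\cdot,A,D)$ is already upper semicontinuous on $D$, a second application of the regularization operator to its restriction to $D$ reproduces the same function on all of $\overline D$, so the outer $*$ is superfluous and the required inequality reduces to $\omega^*(x,A,D)\le -1$ for $x\in A$. This is exactly the weak regularity hypothesis applied to the relatively open set $U=A$: it yields $\omega^*(\cdot,A,D)\le -\chi_A$ on $\partial D$, hence $\le -1$ on $A$.

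Third, I would conclude: by the first two steps $\omega^*(\cdot,A,D)$ belongs to the family defining $\omega(\cdot,A,D)$, so $\omega^*(\cdot,A,D)\le \omega(\cdot,A,D)$ on $D$; the reverse inequality $\omega(\cdot,A,D)\le \omega^*(\cdot,A,D)$ holds by the very definition of the usc regularization. Equality follows, and since $\omega^*(\cdot,A,D)\in\PSH(D)$ we get $\omega(\cdot,A,D)\in\PSH(D)$ automatically.

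There is no real obstacle here; the corollary is essentially a bookkeeping exercise once weak regularity and the definition of $\omega$ are in place. The only point requiring a moment of care is the identity $(\omega^*)^{*}=\omega^*$ on $\overline D$, which lets the weak regularity hypothesis be applied directly without worrying about an additional layer of regularization at the boundary.
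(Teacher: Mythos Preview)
Your proposal is correct and follows exactly the argument the paper gives (in the remark preceding Proposition~\ref{prop2.3}): weak regularity makes $\omega^*(\cdot,A,D)$ an admissible competitor in the supremum defining $\omega(\cdot,A,D)$, which forces equality. The paper leaves the corollary without its own proof because that remark already does the job.
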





 \begin{proposition}  \label{prop2.6}
  Let $D$ be  a weakly regular domain in $\C^n$  and  $A \subset\partial D$ be open. Suppose that $\{D_j\}$ is an increasing sequence of open subsets of $D$ such that $D=\cup D_j$ and  $A\subset\cap_j\partial{D_j}.$ Then $$ \lim_{j\rightarrow \infty}\omega(x, A,D_j)=\omega(x, A,D),\quad \mbox{   for   } x\in D.  $$
 \end{proposition}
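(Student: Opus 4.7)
The plan is to use the monotonicity from Proposition \ref{prop2.1} and then a direct comparison of the admissible families in the sup defining $\omega$. Since $D_j\subset D_{j+1}\subset D$ and $A\subset\partial D_j\cap \partial D_{j+1}\cap \partial D$, the second assertion of Proposition \ref{prop2.1} gives, on $D_j$,
$$\omega(\cdot, A, D)\le \omega(\cdot, A, D_{j+1})\le \omega(\cdot, A, D_j).$$
Thus, for every fixed $x\in D$, once $j$ is large enough that $x\in D_j$, the sequence $\omega(x,A,D_j)$ is non-increasing and bounded below by $\omega(x,A,D)$. Setting $\phi(x)=\lim_{j\to\infty}\omega(x,A,D_j)$, one direction $\phi\ge \omega(\cdot,A,D)$ is immediate.

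For the reverse inequality I would argue at the level of the defining families. Fix an arbitrary $u\in \PSH(D)$ with $u\le 0$ and $u^*\le -1$ on $A$. Its restriction $u|_{D_j}$ belongs to $\PSH(D_j)$ and is still non-positive, and the key technical point is that for $z\in A\subset\partial D_j$,
$$(u|_{D_j})^*(z)=\limsup_{y\to z,\ y\in D_j} u(y)\le \limsup_{y\to z,\ y\in D} u(y)=u^*(z)\le -1,$$
because $D_j\subset D$ restricts the set of approach sequences. Thus $u|_{D_j}$ is admissible for $\omega(\cdot,A,D_j)$, so $u\le \omega(\cdot,A,D_j)$ on $D_j$. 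Letting $j\to\infty$ gives $u\le \phi$ on $D$, and taking the supremum over all admissible $u$ yields $\omega(\cdot,A,D)\le \phi$. Combined with the first paragraph this completes the proof.

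The main (and essentially only) delicate step is the comparison of upper regularizations at boundary points shared by $D$ and $D_j$; the inclusion $D_j\subset D$ makes this inequality go the favourable way, so no real obstruction arises. The assumptions that $D$ is weakly regular and $A$ is open are not invoked in this pointwise argument, but they fit naturally via Corollary \ref{cor2.4}, which identifies $\omega(\cdot,A,D)$ with $\omega^*(\cdot,A,D)\in \PSH(D)$ and makes the limit $\phi$ itself plurisubharmonic on $D$.
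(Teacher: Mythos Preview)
Your argument has a gap: the second paragraph does not prove the reverse inequality, it re-proves the one you already have. Taking an admissible $u$ for $\omega(\cdot,A,D)$ and observing that $u|_{D_j}$ is admissible for $\omega(\cdot,A,D_j)$ yields $u\le \omega(\cdot,A,D_j)$ on $D_j$, hence $u\le\phi$, hence $\omega(\cdot,A,D)\le\phi$. But this is exactly Proposition~\ref{prop2.1} again --- the same inequality as in your first paragraph. What is missing is $\phi\le \omega(\cdot,A,D)$, and for that you must show that $\phi$ itself belongs to the admissible family defining $\omega(\cdot,A,D)$: that $\phi\in\PSH(D)$ and $\phi^*\le -1$ on $A$. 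Your closing remark that ``the assumptions that $D$ is weakly regular and $A$ is open are not invoked'' should have been a warning sign.

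The paper fills this gap precisely through those hypotheses. Because $A$ is open one has $\omega(\cdot,A,D_j)\in\PSH(D_j)$ (cf.\ Corollary~\ref{cor2.4}), so the decreasing limit $\phi$ is plurisubharmonic on $D$; and then $\phi^*\le -1$ on $A$ allows one to feed $\phi$ back into the sup defining $\omega(\cdot,A,D)$, giving $\phi\le\omega(\cdot,A,D)$. Without knowing that the $\omega(\cdot,A,D_j)$ are themselves plurisubharmonic, you have no handle on $\phi$, and the argument does not close.
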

  \begin{proof}
 Set $v=\lim \omega(.,A,D_j)$. By Proposition \ref{prop2.1},  $\omega(.,A,D_{j+1}) \leq \omega(.,A,D_j)$ and because $A$ is relatively open,  $ \omega(.,A,D_j) \in \PSH(D_j)$, hence  $v\geq \omega(.,A,D)$. Now $v\in \PSH(D)$ and $v^*\leq-1$ on $A$, therefore $v\leq \omega(.,A,D)$. It follows that $v=\omega(.,A,D).$
\end{proof}

\begin{Remark} We don't know if the condition that $A$ be open, can be dropped.
\end{Remark}
  
\begin{proposition} \label{prop2.2} Let  $D\subset\C^n$ be B-regular  and $A\subset\partial D$. Then  $\omega^*(.,A,D)=0$ on $\partial D\setminus (\overline A)^o$.
\end{proposition}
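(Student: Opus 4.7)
The plan is to reduce the claim to showing that $\omega^*(z_0,A,D)\ge 0$ for each $z_0\in\partial D\setminus(\overline A)^o$, since $\omega(\cdot,A,D)\le 0$ on $D$ already gives $\omega^*(\cdot,A,D)\le 0$ on $\overline D$ for free. I read $(\overline A)^o$ as the interior of $\overline A$ inside $\partial D$ with its subspace topology; otherwise the statement would be false (for instance when $A=\partial D$, which would force $\omega^*\equiv -1$ on $\partial D$ instead).

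The key ingredient is the construction of a good competitor at any point $w\in\partial D\setminus\overline A$. Using B-regularity via the second equivalent characterization recalled in the introduction, I pick $f\in\PSH(D)\cap C(\overline D)$ with $f(w)=0$ and $f<0$ on $\overline D\setminus\{w\}$. Since $\overline A$ is compact and misses $w$, the continuous function $f$ attains a strictly negative maximum $-c$ on $\overline A$. The rescaled function $f/c$ is then $\le 0$ on $D$ and, by continuity on $\overline D$, satisfies $(f/c)^*\le -1$ on $A$, so it is admissible for $\omega(\cdot,A,D)$. Hence $f/c\le\omega(\cdot,A,D)$ on $D$, and passing to the limsup as $z\to w$ yields $\omega^*(w,A,D)\ge f(w)/c=0$, whence $\omega^*(w,A,D)=0$.

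For a general $z_0\in\partial D\setminus(\overline A)^o$, I choose a sequence $w_k\in\partial D\setminus\overline A$ with $w_k\to z_0$, which exists by the definition of interior in $\partial D$. The previous step gives $\omega^*(w_k,A,D)=0$, so I can pick $z_k\in D$ with $|z_k-w_k|<1/k$ and $\omega(z_k,A,D)>-1/k$. Then $z_k\to z_0$ and $\omega(z_k,A,D)\to 0$, and the definition of $\omega^*$ yields $\omega^*(z_0,A,D)\ge 0$, finishing the proof.

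The main obstacle is the use of B-regularity to produce the peak function together with the rescaling $f\mapsto f/c$: this step crucially requires $w\notin\overline A$, not merely $w\notin A$, in order that $\overline A$ be bounded away from the peak point and the normalization constant $c$ be strictly positive. Once this scaling is in place, the diagonal step that propagates the vanishing of $\omega^*$ from $\partial D\setminus\overline A$ to $\partial D\setminus(\overline A)^o$ is routine.
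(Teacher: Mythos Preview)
Your proof is correct and follows essentially the same strategy as the paper: construct, via B-regularity, a continuous plurisubharmonic competitor that vanishes at a chosen point of $\partial D\setminus\overline A$ and is $\le -1$ on $A$, and then pass to all of $\partial D\setminus(\overline A)^o$ by density. The only cosmetic difference is that the paper builds the competitor as the Perron--Bremermann solution $u_f$ of a Urysohn function $f\in C(\partial D,[-1,0])$ with $f=-1$ on $A$ and $f=0$ near the test point, whereas you use the peak-function characterization of B-regularity and rescale; both are equivalent manifestations of B-regularity and lead to the same conclusion.
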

\begin{proof} If $\partial D\setminus (\overline A)^o$ is empty there is nothing to prove, if not let $x\in \partial D\setminus (\overline A)^o$ and $r>0$. Let $z\in B(x,r)\cap\partial D\setminus \overline A$   and let $U$ be a neighborhood of $\overline A$ that does not contain $z$. Then there exists $f\in C(\partial D,[-1,0])$ such that $f=-1$ on $A$ and $f=0$ on $\partial D\setminus U$. We have $u_f\leq \omega(.,A,D)$. Thus $$ 0=u_f(z)\leq\sup _{B(x,r)\cap D} u_f\leq   \sup _{ B(x,r)\cap D}\omega(.,A,D)\leq 0.  $$
This holds for all $r>0$. Hence $0= \lim_{r\rightarrow 0}    \sup _{ B(x,r)\cap D}  \omega(.,A,D)=\omega^*(x,A,D)$.
\end{proof}

 


What the definition of ``right'' boundary behavior should be, is not entirely clear. 
In \cite{Sa} Sadullaev defines some alternative versions of the boundary extremal function, and in \cite {Wi, Wi1} Wikstr\"om considers for smoothly bounded domains $D$ as boundary value of $u\in\PSH(D)$ in a point $z\in \partial D$  the  value $u^R(z)=\limsup_{x\in N_z, x\to z}u(x)$ where $N_z$ is the real normal at $z$,  or $u^\alpha$ along a Kor\'anyi-Stein region at $z$. He shows by an example that the limit may depend on the aperture $\alpha$ of the region. These alternate definitions of boundary values in general do not lead to upper semicontinuous functions on the closure of the domain. 

We review Sadullaev's definitions, adapting the notation slightly to our situation. For the remainder of this section, $D$ will be a smoothly bounded domain.
Let $C_\alpha(\xi)=\{z\in D: |z-\xi|<\alpha\delta_\xi(z)\}$, where $\xi\in\partial D$, $\alpha> 1$, and $\delta_\xi(z)$ is the distance from $z$ to the real tangent space at $\xi$ to $\partial D$. Then for a function $u$ on $D$, set
\[u^\#(\xi)=\sup_{\alpha>1}\limsup_{z\to\xi,z \in C_\alpha(\xi)}u(z).\]

\begin{definition}[Sadullaev]
Let  $D$  and $A\subset\partial D$. For $z\in D$ set 

\begin{itemize}
\item  $\omega_1(z,A,D)=\sup\{ u(z), u\in \PSH(D)\cap C(\overline D), u\le 0, u|_A\leq-1\},$
\item  $\tilde \omega(z,A,D)=\sup\{ u(z), u\in \PSH(D), u\le 0, u^\#|_A\le -1 \},$
\item  $\omega_3(z,A,D)=\sup\{u(z), u\in \PSH(D) u\le 0, \limsup_{z\rightarrow\zeta, z\in n_{\zeta}} u(z)\leq-1 \text{ for all } \zeta\in A\}$, where $n_{\zeta}$ is the inward  normal to $\partial D$ at $\zeta$.
\end{itemize}
\end{definition}

\begin{Question}[Sadullaev, \cite{Sa}] Let $K\subset \partial D$ be compact. Clearly
\begin{equation}\label{eq-sa}\omega_1(z,K,D)\le \omega(z,K,D)\le\tilde \omega(z,K,D)\le \omega_3(z,K,D),\quad\text{for $z\in D$}.\end{equation}
For which $D$ and $K$ and which of these boundary extremal functions do the upper semicontinuous regularizations coincide? 
\end{Question}

As far as we know, little progress to this question has been reported.
If $D$ is B-regular, it is easy to see that $\omega_1(z,K,D)=\omega(z,K,D)$. Indeed, if $u\in\PSH(D)$ and $u^*\le -\chi_K$, then, because $u^*$ is upper semicontinuous, there exists a $v\in C(\partial D)$ with $v\ge u^*$ on $\partial D$ and $v=-1$ on $K$. Hence  $v_f\ge u$, and the equality follows.

For complete pseudoconvex Reinhardt domains, i.e., balanced multi-circular domains $D$ and multicircular $K$ we have the following result.
\begin{theorem}
Let $D$ be a smoothly bounded complete pseudoconvex Reinhardt domain in $\C^n$, and $K\subset\partial D$ a multi-circular compact set that does not meet any of the hyperplanes $\{z_j=0\}$.
Then all the inequalities in \eqref{eq-sa} are equalities. In particular this answers Sadullaev's question, all the upper semicontinuous regularizations coincide.
\end{theorem}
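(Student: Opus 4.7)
Plan. My strategy exploits the $\T^n$-invariance of $(D,K)$ and the correspondence between multi-circular plurisubharmonic functions on $D$ and convex functions on the logarithmic image $\widehat D := \{(\log|z_1|,\ldots,\log|z_n|) : z \in D \cap (\C^*)^n\}$. Pseudoconvexity of $D$ makes $\widehat D$ convex, completeness makes it downward-closed in the componentwise order, and smoothness of $\partial D$ together with $K$ avoiding the coordinate hyperplanes makes $\widehat K := \log|K|$ a compact subset of the smooth finite part of $\partial\widehat D$.

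First, the torus action $(z_1,\ldots,z_n) \mapsto (e^{i\theta_1}z_1,\ldots,e^{i\theta_n}z_n)$ preserves $D$, $K$, the tangent hyperplanes, the inward normals, and the Koranyi-Stein cones $C_\alpha(\zeta)$, so each of the four classes of competitors is $\T^n$-invariant and each of $\omega_1,\omega,\tilde\omega,\omega_3$ is multi-circular on $D$. For a competitor $u$ in a given class, the upper semicontinuous regularization of its orbit supremum $\sup_\theta u(e^{i\theta_1}z_1,\ldots,e^{i\theta_n}z_n)$ is a multi-circular plurisubharmonic function dominating $u$ which, by a limsup/sup interchange exploiting the multi-circularity of $K$ together with the boundary control on $u$, remains in the same class. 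This reduces each of the four identities to its multi-circular version, i.e., to an identity among suprema of convex functions on $\widehat D$.

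The core is then a convex-analytic lemma: if $v$ is convex on a convex open $\Omega\subset\R^n$ with $v\le 0$, $t_0\in\partial\Omega$ is a smooth boundary point, and $d$ is an inward direction at $t_0$, then $\limsup_{s\downarrow 0}v(t_0+sd)\le -1$ implies $\limsup_{\Omega\ni t\to t_0}v(t)\le -1$. The proof is by chord extension: for any $t\to t_0$ in $\Omega$, choose $s=s(t)\downarrow 0$ so that the direction $-(t-t_0-sd)$ points strictly into $\Omega$ from $t$, and extend the line through $t$ and $t_0+sd$ backward to a point $\phi(-L)$ staying in $\Omega$ with $L=L(t)\to\infty$; convexity along the chord yields $v(t)\le \frac{1}{1+L}v(\phi(-L))+\frac{L}{1+L}v(t_0+sd)\to -1$. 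Applied with $d$ equal to the log-coordinate image of the $D$-inward normal at $\zeta\in K$, which is a strictly inward but in general non-normal direction at $\log|\zeta|\in\partial\widehat D$ by direct computation with the multi-circular defining function of $D$, this proves $\omega=\tilde\omega=\omega_3$ in the multi-circular setting, and hence on $D$.

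To close $\omega_1=\omega$, approximate from below by affine supports. Any convex $v\le 0$ on $\widehat D$ with $v^*\le -1$ on $\widehat K$ equals the supremum of its affine minorants $L(t)=\langle\alpha,t\rangle+c\le v$; downward-closedness of $\widehat D$ forces $\alpha\in\R^n_{\ge 0}$, and $L\le v$ together with $v^*\le -1$ on $\widehat K$ gives $L\le -1$ on $\widehat K$. Lifting, $\sum_j\alpha_j\log|z_j|+c$ is a multi-circular plurisubharmonic function on $D$, and its truncation $\max(\sum_j\alpha_j\log|z_j|+c,-N)$ for $N\ge 1$ belongs to $\PSH(D)\cap C(\overline D)$, is $\le 0$ on $D$, and $\le -1$ on $K$, hence a competitor for $\omega_1$. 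Taking suprema over $L$ and $N$ recovers $v$, closing the chain. The main obstacle is the convex lemma: balancing $s$ and $L$ to keep $\phi(-L)\in\Omega$ while $L\to\infty$ requires care, especially for near-tangential approaches $t\to t_0$, where the downward-closedness of $\widehat D$ plays a role; a secondary subtlety is verifying that the torus-symmetrization of the first step genuinely reduces the problem to the multi-circular setting without losing sup-value.
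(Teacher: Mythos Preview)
Your overall strategy --- reduce to multi-circular competitors via torus-symmetrization, then work with convex functions on the logarithmic image $\widehat D$ --- differs substantially from the paper's, and it has two genuine gaps.

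\textbf{The torus-symmetrization for $\omega_3$.} For a competitor $u$ with normal $\limsup\le -1$ on $K$, the orbit supremum $w(z)=\sup_\theta u(e^{i\theta}z)$ is indeed upper semicontinuous and plurisubharmonic (compactness of $\T^n$), and its normal limit at $\zeta\in K$ is $\limsup_{s\to 0}\sup_\theta u\bigl(e^{i\theta}\zeta-s\,\nu_{e^{i\theta}\zeta}\bigr)$. You need $\limsup_s\sup_\theta\le\sup_\theta\limsup_s$, but this interchange fails in general for USC families over a compact parameter set, and nothing specific to plurisubharmonicity rescues it here. So it is not clear that $w$ remains a competitor for $\omega_3$, and without this the passage to convex $v$ on $\widehat D$ does not go through for that class. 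You flag this as a ``secondary subtlety'' but do not resolve it.

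\textbf{The convex lemma.} As stated (convex $\Omega$, smooth $t_0\in\partial\Omega$, $v$ convex and $\le 0$) it is false: on the unit disc $\Omega\subset\R^2$ with $t_0=(1,0)$ and $d=(-1,0)$, the function $v=-1+\sup_{k\ge 0}a_k$ with $a_0\equiv 0$ and $a_k(x,y)=\tfrac{k-1}{2}(x-1)+\sqrt{k}\,y$ is convex with $-1\le v\le 0$ on $\Omega$ and $v(1-s,0)\equiv -1$, yet $v\bigl(1-\tfrac1k,\tfrac{1}{\sqrt k}\bigr)\ge -\tfrac12$. You lean on downward-closedness of $\widehat D$, but your chord-extension still breaks for near-tangential $t\to t_0$: whenever $(t-t_0)_j>0$ for some $j$, the extension direction $t-t_0-sd$ has $j$-th component strictly positive (since $d_j\le 0$), so the ray leaves the coordinate-bounded set $\widehat D$ after finite length, and $L\to\infty$ is impossible.

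The paper sidesteps both issues. It takes an \emph{arbitrary} competitor $u$ for $\omega_3$ (no symmetrization), observes that for $\zeta\in K$ the $\C^n$-inward normal lies inside the polydisc $\Delta_\zeta$ with distinguished boundary $\T^n\cdot\zeta\subset K$, and applies the Poisson integral formula for $n$-subharmonic functions on concentric polydiscs together with dominated convergence to propagate the normal limit $-1$ from the distinguished boundary to $u\equiv -1$ on all of $\Delta_\zeta$. Hence $u\le\omega(\cdot,E,D)$ for the interior set $E=\bigcup_{\zeta\in K}\Delta_\zeta$, and the known affine representation of the interior relative extremal function $\omega(\cdot,E,D)$ for Reinhardt $D$ and multi-circular $E$ then yields $\omega_3\le\omega_1$ directly. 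Your argument for $\omega_1=\omega$ via affine minorants is essentially this last step and is sound; the difficulty is entirely in the $\omega_3$ bound, where the Poisson-kernel argument is the missing ingredient.
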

\begin{proof} Let $\Delta\subset \overline D$ be an open polydisc with distinguished boundary $T$ contained in $K$. By a linear change of variables, we can assume in the first part of the proof that $\Delta$ is the unit polydisc. 
Let $u\in\PSH(D)$, $u\le 0$ and $\limsup_{z\rightarrow\zeta, z\in n_{\zeta}} u(z)\leq-1 \text{ for all } \zeta\in K$. Replacing $u$ by $\max\{u, -1\}$, we can assume that the limsup is a limit and equals -1. From the multi-circular assumptions on $D$ and $K$, $n_\zeta=n_{|\zeta|}$, and because $D$ is pseudoconvex $n_\zeta$ is contained in $\Delta$ if $\zeta\in T$. Thus 
\[n_\zeta=\{(t+(1-t)r)\zeta\isdef ((t+(1-t)r_1)\zeta_1,\dots,(t+(1-t)r_n)\zeta_n):\ t\in[0,1]\}\]
for certain $0< r_1, r_2, \dots r_n<1$. 
Recall that plurisubharmonic functions are $n$-subharmonic, so that for all $w\in \Delta$ we find
\begin{equation}\label{eqPoi}\begin{split}u((t+(1-t)r)w)\le\quad\\
\le \frac1{(2\pi)^n}\int_T P(w_1,\zeta_1)\dots P(w_n,\zeta_n)u((t+(1-t)r)\zeta)\frac{d\zeta_1}{i\zeta_1}\dots\frac{d\zeta_n}{i\zeta_n}.\end{split}
\end{equation}
where $P$ denotes the Poisson kernel for the unit disc, cf.~\cite[Thm. 3.2.4]{Ru}. Because
\[\lim_{t\to 1}u((t+(1-t)r)\zeta=-1, \quad \zeta\in T,\]
the dominated convergence theorem gives that $u=-1$ on $\Delta$.

It follows that $u=-1$ on the open set $E=\cup \Delta$, where the union is taken over all polydiscs $\Delta$ with distinguished boundary in $K$.
Hence $u(z) \le \omega(z, E,D)$, and hence also $\omega_3(z, K, D)\le \omega(z, E, D)$. 

Let $\pi(z)=(\log|z_1|,\dots, \log|z_n|)$ and for $S\subset \C^n$, let $S_\R=\{x\in \R^n: x=\pi(z),\ z\in S\}$. Moreover, 
for $\Omega$ open and convex in $\R^n$ and $F\subset\Omega$ compact, put
\[A(x,F,\Omega)=\sup\{f(x):f\text{ convex on $\Omega$}, f\le 0, f|_F\le -1\}.\]
It is known, see \cite[Proposition 3.4.1]{JaPf} and also \cite{Tho} that
 \[ \omega(z, E, D)=A(\pi(z), E_\R,D_\R),\] 
and that $A(x,F,\Omega)$ is already obtained as the supremum of affine functions. It follows that $\omega(z,E,D)$ can be obtained as the supremum of  affine functions  of $\log|z_j|$, ($j=1,\dots,n$), that are bounded by 0 on $D$ and less then -1 on $E$. We may replace all such functions $u$ by 
$\tilde u=\max\{u, -1\}$. Because $\tilde u\in\PSH(D)\cap C(\overline D)$, we infer that $\omega_3(z,K, D)\le \omega_1(z,K, D)$, which proves the theorem.
\end{proof}

\section{Boundary pluripolar sets and boundary pluripolar hulls}
As in the classical case the boundary relative extremal function can be used to describe boundary pluripolar sets.
The characterizations of Sadullaev \cite{Sa}, Levenberg-Poletsky \cite{LePo}, also cf. \cite{EdSi1}, of pluripolar hulls and their proof also hold for b-pluripolar sets. We will include this result with its very similar proof for convenience of the reader in Proposition \ref{Prop3.3}. As in the classical case a countable union of b-pluripolar set is b-pluripolar (Proposition \ref{prop2.12}). However, in contrast with the classical case  where the relative extremal function $\omega^*(.,E,D)$ of a subset $E\subset D$ has the property that $\{z\in E, \omega^*(z,E,D)>-1\}$ is pluripolar, the set $\{z\in A, \omega^*(z,A,D)>-1\}$ is not in general b-pluripolar and the behavior of $ \omega^*(z,A,D)$ at the boundary of $D$ is not very informative, see Example \ref{Ex3.2}.

\noindent
\begin{definition}
We say  that a subset $A\in \partial D$ is a \emph{b-pluripolar set} if there exists  a $u\in \PSH(D)$, $u\leq 0$, $u\not\equiv -\infty$, such that $u^*=-\infty$ on $A$. 
\end{definition}

It is well known that a compact set $K\subset\T$ in the boundary of the unit disc $\D$ is b-polar if and only if it has arc length 0, and that not all such sets are polar. Hence there exist b-polar sets that are not polar. This example can be modified to the several variables situation.

\begin{example} Let $K$ be a b-polar set in $\T$ that is not polar and let $u$ be a subharmonic function on $D$ such that $u\le 0$ and $u^*|_K=-\infty$. 
Consider the function $v$ on the unit ball $\B\subset \C^2$ defined by $v(z,w)=u(z^2+w^2)$. Let 
$$A=\{(z,w)\in\partial \B: z^2+w^2\in K\}.$$
Then $v^*=-\infty$ on $A$, hence $A$ is b-pluripolar. Now if $A$ would be pluripolar we could find, invoking Josefson's theorem, cf.~\cite{Jo}, $f\in\PSH(\C^2)$ so that $f|_A=-\infty$. Consider for $\alpha\in[0,2\pi)$ the function $f_\alpha$ on $\C$ defined by $f_\alpha(\zeta)=f(\zeta\cos\alpha,\zeta\sin\alpha)$. It is subharmonic or identically equal to $-\infty$. Take a \revi{branch} $h(z)$ of $\sqrt z$ with branch cut not meeting $K$. Then $f_\alpha\circ h=-\infty$ on $K$. It follows that $f_\alpha\equiv-\infty$. In particular $f=-\infty$ on $\R^2\subset\C^2$, which is not a pluripolar set. The conclusion is that $A$ is not pluripolar.
\end{example}
\noindent
\begin{definition}\label{Def3.2}
Let  $ A\subset\partial D$ be b-pluripolar. The set  
$$ \{z\in \overline D:\  u^*(z)=-\infty, \text{ for all }u\in \PSH(D) \text{ with } u\not\equiv-\infty, \  u<0,\  u^*|_A=-\infty \}$$ 
will be called the \emph{b-pluripolar hull} of $A$ and  will be denoted by $\hat{A}$.
\end{definition}

\begin{example}\label{Ex3.2}
\revi{Let $A= A_\alpha=\{(e^{i\phi}\cos\alpha ,e^{i\psi} \sin\alpha ): \phi,\psi\in [0,2\pi)\}\subset\partial \B$ be the distinguished boundary of a polydisc $\Delta_\alpha$. } We have $ \omega^*(.,A,\B)\equiv0$ on $\partial \B$, see Proposition \ref{prop2.2}. But every $u\in \PSH(\B)$ such that $u^*|A\equiv-\infty$ is identically $-\infty$ on the polydisc, hence $u\equiv-\infty$ on $\B$ and $A$ is not b-pluripolar.

Similarly, for
$E_m=\cup_{j=1}^m A_{\alpha_j}$, we also find $\omega^*(.,E_m,\B)\equiv0$ on $\partial \B$. However, if we choose $(\alpha_j)_j$ a dense sequence in $(0,2\pi)$ we find for $z\in \partial \B$
\[0=\lim_{m\to\infty}\omega^*(z,E_m,\B)\ne \omega^*(z,\lim_{m\to\infty}E_m,\B)=-1.\]
Indeed, if $u\in \PSH(D)$ is negative and $u^*\le -1$ on all $E_m$ we have \revi{$u\le-1$} on $\cup_j \Delta_{\alpha_j}$.

\end{example}

\begin{proposition}[cf. \cite{Sa, LePo, EdSi1}]\label{Prop3.3}
Let $D\subset \C^n$ be   a domain in $\C^n$   and $A\subset \partial{D}$. Then the following conditions are equivalent :
\begin{enumerate}
\item $\omega^*(.,A,D)\equiv 0$;
\item $A$ is b-pluripolar.
\end{enumerate} 
In this case  
$$\hat{A} \cap D=\{z\in D, \quad \omega(.,A,D)<0\}.$$
\revi{In particular $\hat A\cap D$ is pluripolar.}
\end{proposition}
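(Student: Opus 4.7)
The plan is to follow the template of the classical pluripolar-hull theorem (Sadullaev \cite{Sa}, Levenberg--Poletsky \cite{LePo}, Edigarian--Sigurdsson \cite{EdSi1}), adapting each step to the boundary setting. The defining family $\mathcal F$ of $\omega(\cdot,A,D)$ has two operational features that drive everything: it is closed under multiplication by any scalar $\lambda\in(0,1]$, and a countable sum of non-positive members of $\mathcal F$ automatically has $u^*|_A=-\infty$.

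For (2)$\Rightarrow$(1), given a b-pluripolar witness $u$, pick any $z_0\in D$ with $u(z_0)>-\infty$. Then $\epsilon u\in\mathcal F$ for every $\epsilon>0$ gives $\omega^*(z_0,A,D)\ge \epsilon u(z_0)$, so $\omega^*(z_0,A,D)=0$. Because $\omega^*(\cdot,A,D)$ is plurisubharmonic, bounded above by $0$, and $D$ is connected, the strong maximum principle forces $\omega^*(\cdot,A,D)\equiv 0$. The identical scaling argument handles the inclusion $\{\omega(\cdot,A,D)<0\}\subseteq\hat A\cap D$: if $u$ is admissible in Definition \ref{Def3.2} and $z_0$ satisfies $\omega(z_0,A,D)<0$, then $u/M\in\mathcal F$ for every $M>0$ gives $u(z_0)/M\le \omega(z_0,A,D)<0$ for all $M$, ruling out $u(z_0)>-\infty$.

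The reverse implication (1)$\Rightarrow$(2) and the inclusion $\hat A\cap D\subseteq\{\omega(\cdot,A,D)<0\}$ both rest on a diagonal summation. Fix $z_0\in D$ with $\omega(z_0,A,D)=0$; such points exist because, by (1), $\omega=\omega^*$ off a pluripolar set. Using the definition of $\omega$ as a supremum, extract $u_k\in\mathcal F$ with $u_k(z_0)>-2^{-k}$ and set $u=\sum_{k\ge 1}u_k$. The partial sums $S_N=\sum_{k=1}^N u_k$ are PSH and decreasing, and $u(z_0)\ge -1>-\infty$, so $u\in\PSH(D)$. For $x\in A$ the monotone estimate
\[u^*(x)\le\limsup_{z\to x}\sum_{k=1}^N u_k(z)\le\sum_{k=1}^N u_k^*(x)\le -N,\]
valid for every $N$, forces $u^*|_A=-\infty$; hence $u$ witnesses b-pluripolarity. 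For the hull inclusion, to upgrade the non-strict bound $u\le 0$ to the strict inequality $u<0$ required by Definition \ref{Def3.2}, I replace $u$ by $u+\epsilon(|z|^2-R)$ with $R>\sup_{z\in D}|z|^2$; this perturbation preserves $u^*|_A=-\infty$ and $u(z_0)>-\infty$ while making $u<0$, producing an admissible function for $\hat A$ that contradicts $z_0\in\hat A$ whenever $\omega(z_0,A,D)=0$.

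Pluripolarity of $\hat A\cap D$ is then immediate from the formula combined with (1): $\hat A\cap D=\{\omega(\cdot,A,D)<\omega^*(\cdot,A,D)\}$ is a negligible set for the family $\mathcal F$, hence pluripolar by the classical Bedford--Taylor negligible-set theorem. The main technical subtlety I anticipate is the diagonal-summation step, where the rapid-decay choice $u_k(z_0)>-2^{-k}$ must be made at a single fixed interior point and presupposes that $\omega$ itself (not only $\omega^*$) already vanishes there; the remaining manipulations are routine.
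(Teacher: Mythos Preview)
Your proof is correct and follows essentially the same approach as the paper: the $\epsilon$-scaling argument for (2)$\Rightarrow$(1) and the inclusion $\{\omega<0\}\subset\hat A$, and the diagonal summation $u=\sum_k u_k$ with $u_k(z_0)>-2^{-k}$ for (1)$\Rightarrow$(2) and the reverse inclusion, are exactly what the paper does. Your only (minor) deviations are that you invoke the maximum principle for $\omega^*$ rather than arguing that $\omega=0$ off $\{v=-\infty\}$, and you explicitly fix the strict inequality $u<0$ via the perturbation $\epsilon(|z|^2-R)$, a point the paper glosses over.
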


\begin{proof}
If $A$ is b-pluripolar,  take any $v\in \PSH(D)$, $v<0$, $v\not\equiv-\infty$ such that $v^*=-\infty$ on $A$. Then $\epsilon v\leq\omega(.,A,D)$ for all $\epsilon>0$,  hence if for some $z$ $\omega(z,A,D)<0$ then $v(z)=-\infty$, and it follows that $z\in \hat A$. Moreover, for all $z$ such that $v(z)>-\infty$ we find $\omega(z,A,D)=0$, hence $\omega^*(.,A,D)\equiv 0$.\\
Assume now that $\omega^*(.,A,D)\equiv 0$. Let $z\in D$ be such that $\omega(z,A,D)=0$.  For $j\in \mathbb N$ there is a negative $u_j\in\PSH(D)$ with $u_j^*|_A\le -1$ and  $u_j(z)>-2^{-j}$.  Define 
$$v(y)=\sum_{j=1}^{\infty} u_j(y)  \quad (y\in D). $$
Observe that $v(z)>-1$, hence as a limit of a decreasing sequence of negative plurisubharmonic functions, $v\in \PSH(D)$, negative and not identically $-\infty$. Moreover, $v^*|A\equiv-\infty$. We conclude that $A$ is b-pluripolar and $z\not\in \hat A$. Finally, as $\hat A\cap D=\{z\in D: \omega(z, A, D)\ne\omega ^*(z, A, D)\}$ is negligible, it is pluripolar.
\end{proof}

 \begin{proposition}\label{prop2.12}
Let $D$ be a bounded domain in  $\C^n$. Suppose that $A=\cup_j A_j,$ where $A_j\subset \partial D$ for $j=1,2,\cdots $. If $\omega^*(., A_j,D)\equiv 0$ for each $j$, then $\omega^*(., A,D)\equiv 0$,
\revi{hence a countable union of b-pluripolar sets is b-pluripolar.}
\end{proposition}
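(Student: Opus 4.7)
By Proposition \ref{Prop3.3}, the hypothesis $\omega^*(.,A_j,D)\equiv 0$ means exactly that each $A_j$ is b-pluripolar; dually, producing a single $v\in\PSH(D)$ with $v<0$, $v\not\equiv-\infty$ and $v^*=-\infty$ on $A$ will give $\omega^*(.,A,D)\equiv 0$ by another application of the same proposition. So the whole task reduces to gluing witnesses $v_j$ for the individual $A_j$ into one function.

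The plan is a standard Josefson-type summation. Fix for each $j$ a negative $v_j\in\PSH(D)$ with $v_j\not\equiv -\infty$ and $v_j^*|_{A_j}=-\infty$. Each polar locus $\{v_j=-\infty\}\cap D$ is pluripolar in $D$, and a countable union of pluripolar subsets of $D$ is again pluripolar, so $\bigcup_j\{v_j=-\infty\}\cap D$ is a proper subset of $D$. Choose $z_0\in D$ in its complement, so that $v_j(z_0)>-\infty$ for every $j$, and pick $\epsilon_j>0$ with $\epsilon_j v_j(z_0)\ge -2^{-j}$. Set
\[v(z)=\sum_{j=1}^{\infty}\epsilon_j v_j(z),\qquad z\in D.\]
The partial sums form a decreasing sequence of negative plurisubharmonic functions, so their limit $v$ is either in $\PSH(D)$ or identically $-\infty$; since $v(z_0)\ge -1$, the former holds.

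It remains to check that $v^*\equiv -\infty$ on $A$. For $a\in A$, choose $j_0$ with $a\in A_{j_0}$. Since every term in the series is $\le 0$, we have $v\le \epsilon_{j_0}v_{j_0}$ pointwise on $D$, so
\[v^*(a)\le \epsilon_{j_0}v_{j_0}^*(a)=-\infty.\]
Hence $A$ is b-pluripolar, and Proposition \ref{Prop3.3} yields $\omega^*(.,A,D)\equiv 0$. The only non-routine step is the choice of $z_0$, which uses the two classical facts that the $-\infty$-set of a non-identically-$-\infty$ plurisubharmonic function on $D$ is pluripolar and that pluripolarity in $D$ is preserved under countable unions; once $z_0$ is fixed, the summation argument and the passage through Proposition \ref{Prop3.3} are mechanical.
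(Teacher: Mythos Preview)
Your proof is correct and follows essentially the same route as the paper: invoke Proposition~\ref{Prop3.3} to obtain witnesses $v_j$, pick a common point outside all their $-\infty$-sets, rescale so that the series converges at that point, and sum to get a single witness for $A$. The only cosmetic difference is that the paper absorbs the constants $\epsilon_j$ into the $v_j$ by rescaling, whereas you carry them explicitly.
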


\begin{proof}
By Proposition \ref{Prop3.3} above, we can choose $v_j\in \PSH(D)$ such that $v_j<0$ on $D$  and $v_j^*|A_j\equiv-\infty$. Take a point $a\in (D\setminus\cup_jv_j^{*-1}(\{-\infty\}))$. By multiplying each of the functions $v_j^*$ by a suitable positive constant, we may suppose that $ v_j(a)>-2^{-j}.$ As in the proof of the proposition above we check that $v=\sum_j v_j \in \PSH(D), $ $v<0$ on $D$ ,  $v \not\equiv -\infty$ on $D$ and $v^*=-\infty$ on $A.$ By the previous proposition , $\omega^*(.,A,D)\equiv 0.$
\end{proof}

\begin{proposition}\label{Prop3.2}
Let $D\subset \C^n$ be a  B-regular domain  and let $A\subset\partial{D}$ be b-pluripolar. Then 
$$ \hat{A} \cap \partial D= A. $$
\end{proposition}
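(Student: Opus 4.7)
The inclusion $A\subseteq \hat A\cap \partial D$ is immediate from the definition of $\hat A$: any $u\in\PSH(D)$ with $u^*|_A=-\infty$ takes the value $-\infty$ at every point of $A$.

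The real content is the reverse inclusion: given any $z_0\in\partial D\setminus A$, I must exhibit $u\in\PSH(D)$ with $u<0$, $u\not\equiv -\infty$, $u^*|_A=-\infty$, and $u^*(z_0)>-\infty$, which witnesses $z_0\notin \hat A$.

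My plan is to construct $u$ as an infinite sum $u=\sum_n v_n$, where each $v_n\in\PSH(D)$ is strongly negative on $A$ but nearly vanishing at $z_0$, using the B-regularity of $D$ decisively to produce such $v_n$. The preparatory ingredient is that $\omega^*(z_0,A,D)=0$: by Proposition~\ref{Prop3.3} one has $\omega^*(\cdot,A,D)\equiv 0$ on $D$, and since $\omega=\omega^*$ off a pluripolar subset of $D$ I may approach $z_0$ along sequences in $D$ that avoid that negligible set to conclude $\omega^*(z_0,A,D)=0$ as well. Combining this with Choquet's lemma and the continuous peak plurisubharmonic function $\phi_{z_0}\in\PSH(D)\cap C(\overline D)$ at $z_0$ provided by Sibony's characterization of B-regularity, for each $n\ge 1$ I extract $v_n\in\PSH(D)$ with $v_n\le 0$, $v_n^*|_A\le -1$, and $v_n^*(z_0)\ge -2^{-n}$.

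Setting $u=\sum_{n\ge 1}v_n$, the partial sums $V_N=\sum_{n\le N}v_n$ are plurisubharmonic and non-increasing in $N$, so $u$ is plurisubharmonic. For $z\in A$ the standard estimate $V_N^*(z)\le \sum_{n\le N}v_n^*(z)\le -N$ valid for every $N$ forces $u^*(z)=-\infty$, giving $u^*|_A=-\infty$.

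The main obstacle is the lower bound $u^*(z_0)>-\infty$, because for infinite sums of non-positive terms the $\limsup$ is only controlled by the sum of $\limsup$s in the wrong direction. This is precisely where B-regularity is essential. Using the level sets $\{\phi_{z_0}>-1/k\}\subset\overline D$ of the continuous peak function to organize the approach to $z_0$, I perform a diagonal selection producing a single sequence $y_k\to z_0$ in $D$ along which $v_n(y_k)\ge v_n^*(z_0)-2^{-n}$ for every $n$ and all $k$ sufficiently large. Evaluating $u$ along this sequence then yields $u^*(z_0)\ge -2\sum_{n\ge 1}2^{-n}=-2>-\infty$, showing that $z_0\notin\hat A$ as desired. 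Without a continuous peak psh function at $z_0$ no such common approaching sequence is available in general, which is why B-regularity drives the whole argument.
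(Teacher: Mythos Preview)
There is a genuine gap at the heart of your argument: the existence of the $v_n$ is not established. From $\omega^*(z_0,A,D)=0$ you only learn that for points $z\in D$ arbitrarily close to $z_0$ there is \emph{some} competitor $u$ (depending on $z$) with $u(z)$ close to $0$; this does not produce a single $v_n$ with $v_n^*(z_0)\ge -2^{-n}$. Passing to a Choquet sequence $u_j\nearrow$ does not help: you would need $\lim_j u_j^*(z_0)=0$, but in general $\sup_j\limsup_{z\to z_0}u_j(z)$ is strictly smaller than $\limsup_{z\to z_0}\sup_j u_j(z)$. Nor does the peak function $\phi_{z_0}$ rescue the step. The natural candidate $w=\max(u,M\phi_{z_0})$ does satisfy $w^*(z_0)=0$, but in the hard case $z_0\in\overline A\setminus A$ there are points $a\in A$ with $\phi_{z_0}(a)$ arbitrarily close to $0$, so $w^*|_A\le -1$ fails for every fixed $M$. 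In fact the assertion ``for every $\varepsilon>0$ there exists $v$ in the defining family with $v^*(z_0)>-\varepsilon$'' is essentially equivalent to $z_0\notin\hat A$, which is what you are trying to prove.

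There is a second gap: even granting the $v_n$, the ``diagonal selection'' is not valid. Each $v_n^*(z_0)$ is a $\limsup$ realized along a sequence in $D$ that depends on $n$, and there is no reason the finite intersections $\bigcap_{n\le N}\{v_n>v_n^*(z_0)-2^{-n}\}$ still accumulate at $z_0$; the level sets of $\phi_{z_0}$ are merely a neighbourhood basis of $z_0$ and do nothing to force a common approaching sequence. The paper avoids both difficulties by a different construction: it starts from a single $u$ with $u^*|_A=-\infty$, and when $u^*(z_0)=-\infty$ it builds a correction $v=\sum_j u_{F_j}$ out of Perron--Bremermann solutions of continuous boundary data supported on shrinking annular shells around $z_0$, arranged so that $u+v\le 0$ on $D$ while $(u+v)^*(w_j)\ge -1$ along a specific sequence $w_j\to z_0$. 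B-regularity enters exactly to make each $u_{F_j}$ continuous on $\overline D$, which is what gives control of $(u+v)^*$ at the boundary.
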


\begin{proof}
Obviously $ A\subset \hat A\cap\partial D.$  Now let $z\in\partial D\setminus  A$.  As $A$ is b-pluripolar  there exists $u\in \PSH(D)$ such that $u<-1$, $u^*=-\infty$ on $A$.  If $u^*(z)$ is finite, there is nothing to prove. We will assume $u^*(z)=-\infty$ and construct a function $v\in\PSH(D)\cap  C(\overline D\setminus\{z\})$ so that $(u+v)^*(z)$ is finite \revi{and so that $u+v$ is negative in $D$}. This then shows that $z\notin \hat A$. Let 
$$E_z(j)=\{w\in\partial D: \ \frac{1}{4j+1}\le |z-w| \le \frac1{4j}\}.$$
 Because $u^*$ is usc on $\partial D$ and $A$ is b-pluripolar, \revi{while $E_z(j)$ is not b-pluripolar, $u^*$ assumes a maximum $M_j$ on $E_z(j)$ with $-\infty<M_j\le -1$,} say in $w_j\in E_z(j)$. Let $f_j\le 0$ be continuous on $\partial D$, $f_j>u^*$ and $f_j(w_j)<u^*(w_j)+1$ and let $0\le \chi_j\le 1$ be a smooth function  on $\partial D$ with $\chi_j(w_j)=1$ and compactly supported in $\frac{1}{4j+2}\le |z-w| \le \frac1{4j-1}$, and of sufficiently small size to be determined later. Then 
\begin{align} \label{eq3.2} u^* &\le \sum_{j=1}^\infty  f_j\chi_j \quad \text{on $\partial D$}.\\
\intertext{and}
\label{eq3.3}u^*(w_j)&\ge \sum_{j=1}^\infty  f_j\chi_j (w_j)-1\quad \text{for every $j$}.
\end{align}
 Let $F_j$ be the harmonic function on $D$, continuous on $\partial D$ with boundary values $ -f_j\chi_j$. The series $\sum_{j=1}^\infty F_j$ represents a monotonically increasing sequence of harmonic functions that are continuous up to $\partial D$. By choosing the support of $\chi_j$ sufficiently small, we can achieve, in view of Harnack's theorem, that the series converges uniformly on compact sets in $\overline D\setminus \{z\}$ and represents a harmonic function on $D$ that is continuous on $\overline D\setminus \{z\}$ and has boundary values $\sum_{j=1}^\infty -f_j\chi_j$. 

Now let $v_j= u_{F_j}$ be the  Perron-Bremermann function of $-f_j\chi_j$. Then  $0\le v_j=v_j^*\le F_j$ on $\overline D$ with equality on $\partial D$ because $D$ is B-regular, and $v_j$ is a continuous plurisubharmonic function. It follows that the series $v=\sum_{j=1}^\infty v_j$ is also uniformly convergent on compact sets in $\overline D\setminus\{z\}$, hence it represents a plurisubharmonic function that is continuous up to $\partial D\setminus \{z\}$ with boundary values $\sum_{j=1}^\infty F_j$ on $\partial D\setminus\{z\}$. Then by \eqref{eq3.2} and \eqref{eq3.3} we have
\[u^*+v=\lim_{k\to\infty} (u^*+\sum_{j=1}^kv_k)\le 0\quad\text{\and}\quad \revi{(u^*+v)(w_j)\ge -1 }\text{\ for all $j$}.\]
Because $u^*+v^*$ is usc, we have that $(u^*+v^*)(z)\ge -1$.
\end{proof}

\begin{theorem}\label{thm3.1}
Let $D\subset \C^n$ be   B-regular  and  $A\subset \partial{D}$ be a  b-pluripolar set. Then 
$$\hat{A} =  A\cup \{z\in D, \quad \omega(z,A,D)<0\}.$$
\end{theorem}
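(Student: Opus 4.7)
The approach is simply to paste together the two preceding propositions. By Definition \ref{Def3.2}, $\hat A \subset \overline D = D \cup \partial D$, so it is enough to determine $\hat A \cap D$ and $\hat A \cap \partial D$ separately and take their union.

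Concretely, I would proceed in two steps. First, invoke Proposition \ref{Prop3.3} to identify the interior part: $\hat A \cap D = \{z \in D : \omega(z,A,D) < 0\}$. This step is essentially the classical Sadullaev--Levenberg--Poletsky characterization of the pluripolar hull, transplanted to the boundary setting, and does not require B-regularity. Second, invoke Proposition \ref{Prop3.2}, which does use B-regularity, to pin down the boundary part: $\hat A \cap \partial D = A$. Assembling both identities gives
$$\hat A = (\hat A \cap D) \cup (\hat A \cap \partial D) = A \cup \{z \in D : \omega(z,A,D) < 0\},$$
which is the claimed formula.

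The entire content of the theorem therefore already sits in Proposition \ref{Prop3.2}; the present statement is essentially a bookkeeping corollary, and the main obstacle, namely the boundary identification $\hat A \cap \partial D = A$, has already been surmounted there. I recall that the delicate point in Proposition \ref{Prop3.2} was to show that no $z \in \partial D \setminus A$ can be forced into $\hat A$: for every candidate $u \in \PSH(D)$ with $u < -1$, $u^*|_A = -\infty$, and $u^*(z) = -\infty$, one had to construct a plurisubharmonic corrector $v$, continuous on $\overline D \setminus \{z\}$ and negative in $D$, such that $(u+v)^*(z)$ stays finite. That corrector was assembled as a series of Perron--Bremermann solutions associated to shrinking cut-off functions $\chi_j$ on $\partial D$, with convergence on compacta of $\overline D \setminus \{z\}$ secured by Harnack's theorem and continuity up to the boundary by B-regularity. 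With Propositions \ref{Prop3.3} and \ref{Prop3.2} available, the present theorem is immediate.
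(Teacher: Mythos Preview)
Your proposal is correct and matches the paper's own proof exactly: the paper simply says ``Combine Proposition \ref{Prop3.3} and Proposition \ref{Prop3.2}.'' Your additional recollection of the construction in Proposition \ref{Prop3.2} is accurate but not needed here, since that proposition is already established.
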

\noindent

\begin{proof}Combine Proposition \ref{Prop3.3} and Proposition \ref{Prop3.2}.
\end{proof}

\begin{Remark}
Of course, if the domain is not B-regular, Proposition \ref{Prop3.2} is no longer valid.  \revi{Take for  $D$ the standard open polydisc} and let $A=\{(z,1), |z|=1\}$, then $A$ is b-pluripolar, which is seen by considering $\log|w-1|$ and $\hat A=\{(z, 1), |z|\le 1\}$. The same applies for domains with (fine) analytic discs in the boundary, cf.~\cite{Sib}.
\end{Remark}

\section{Completeness of b-pluripolar sets} 

\begin{definition}
We say  that a subset $A\in \partial D$ is \emph{completely b-pluripolar}  if there exists  a $u\in \PSH(D),$ $u< 0$,\quad  $u\not\equiv -\infty,$ such that $\{z\in \partial D,\quad u^*(z)=-\infty \}=A.$ 
\end{definition}
Zeriahi,\cite{Ze} gave conditions under which a pluripolar set is completely pluripolar. Here we  adapt  Zeriahi's result to boundary pluripolar sets. Our result requires only minor adaptations.

\begin{proposition}\label{prop4.1}
Let $D\subset \C^n$ be a B-regular domain  and $A\subset \partial D$ be a b-pluripolar set. Suppose that $F$ and $K$ are compact subsets  of $\overline D$ with $F\subset \hat A$ and  $K\subset \overline D\setminus \hat A.$ Then for all $C>0$ there exists $\psi_K\in \PSH(D)\cap C(\overline D)$ so that $\psi_K<0$,  $\psi_K< -C$ on $F$, and $\psi_K> -1$ on $K.$ \end{proposition}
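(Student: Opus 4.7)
I plan to construct $\psi_K$ as the maximum of finitely many continuous plurisubharmonic functions, each tailored to a single point of $K$, and to reduce to that pointwise problem using compactness of $K$. The preliminary observation is Theorem~\ref{thm3.1}: since $D$ is B-regular, $\hat A=A\cup\{z\in D:\omega(z,A,D)<0\}$, so $F\cap\partial D\subset A$, $K\cap A=\emptyset$, and $\omega(z,A,D)=0$ for every $z\in K\cap D$; moreover, $F$ and $K$ being disjoint compacta in $\overline D$ gives $\dist(F,K)>0$.

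The reduction step is the following claim: for every $z\in K$ and every $\eta>0$ there exists $\phi_z\in\PSH(D)\cap C(\overline D)$ with $\phi_z\le 0$, $\phi_z<-C$ on $F$, and $\phi_z(z)>-\eta$. Granting such $\phi_z$, continuity makes $U_z:=\{\phi_z>-1\}$ an open neighborhood of $z$ in $\overline D$; compactness of $K$ extracts a finite subcover $U_{z_1},\dots,U_{z_N}$, and $\psi_K:=\max(\phi_{z_1},\dots,\phi_{z_N})$ is plurisubharmonic and continuous on $\overline D$ (finite max of such), $\le 0$, strictly less than $-C$ on $F$ (each $\phi_{z_i}$ is), and strictly greater than $-1$ on $K$ (every $z\in K$ lies in some $U_{z_i}$). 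A final subtraction of a small positive constant, or equivalently a convex combination with a continuous negative plurisubharmonic barrier for $D$ supplied by B-regularity, enforces strict negativity of $\psi_K$ throughout $\overline D$.

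The main obstacle is the construction of $\phi_z$, because the natural candidate produced by $z\notin\hat A$ is very far from continuous. By definition of the b-pluripolar hull there is $v_z\in\PSH(D)$ with $v_z<0$, $v_z\not\equiv-\infty$, $v_z^*\equiv-\infty$ on $A$, hence $v_z^*\equiv-\infty$ on $\hat A$ and in particular on $F$, and $v_z^*(z)>-\infty$; after rescaling I may assume $v_z^*(z)>-\eta$. The challenge is that $v_z$ equals $-\infty$ exactly where $\phi_z$ must remain finite while still being $\le -C$. I plan to resolve this by adapting the series construction from the proof of Proposition~\ref{Prop3.2}: dominate $v_z^*$ on $\partial D$ by a sum $\sum_j f_j\chi_j$ of localized continuous non-positive functions whose supports shrink around the boundary trace of $\{v_z^*=-\infty\}$, sum the corresponding Perron--Bremermann envelopes $u_{f_j\chi_j}$ (each in $\PSH(D)\cap C(\overline D)$ by B-regularity), invoke Harnack to force uniform convergence of the series on compact subsets of $\overline D$ away from that trace, and truncate the result at level $-C$. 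For the boundary part $F\cap\partial D\subset A$, Urysohn's lemma on $\partial D$ (available because $\dist(F\cap\partial D,\{z\})>0$) together with the B-regular Perron--Bremermann extension provides an additional continuous plurisubharmonic summand that is $-2C$ on a neighborhood of $F\cap\partial D$ and bigger than $-\eta$ near $z$; combining the two pieces yields the required $\phi_z$, and hence, by the assembly above, the desired $\psi_K$.
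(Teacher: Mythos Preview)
Your reduction step---building $\psi_K$ as the maximum of finitely many pointwise approximants $\phi_z$ via compactness of $K$---is exactly the skeleton of the paper's proof. The difference, and the difficulty, is entirely in how one obtains $\phi_z\in\PSH(D)\cap C(\overline D)$ with $\phi_z<-C$ on $F$ and $\phi_z(z)>-\eta$.

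The paper does this in one stroke by invoking Wikstr\"om's approximation theorem \cite[Theorem 4.1]{Wi1}: on a B-regular domain, $w^*$ (for any negative $w\in\PSH(D)$) is the decreasing limit on $\overline D$ of a sequence in $\PSH(D)\cap C(\overline D)$. Since $w^*\equiv-\infty$ on the compact set $F$, Dini's theorem yields a term $f_a$ of the sequence with $f_a<-C$ on $F$, while $f_a(a)\ge w^*(a)=-1/2$. This handles interior and boundary points of $F$ simultaneously and gives continuity for free.

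Your proposed construction of $\phi_z$ has genuine gaps. First, the series argument in Proposition~\ref{Prop3.2} is tailored to a \emph{single} boundary point: the cut-offs $\chi_j$ live in shrinking annuli around that point, and the Harnack/Poisson estimate controls the harmonic majorants at any fixed point away from it. Replacing the single point by the full boundary trace $\{v_z^*=-\infty\}\cap\partial D$---a b-pluripolar set that can be topologically complicated---is not a routine adaptation, and you do not indicate how the supports of the $\chi_j$ and the convergence argument are to be organised. Second, and more seriously, neither your truncated series nor your Urysohn--Perron--Bremermann summand forces $\phi_z<-C$ on $F\cap D$: both pieces are built from boundary data and carry no information about prescribed interior compacta. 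Third, the assertion that the Urysohn piece is ``bigger than $-\eta$ near $z$'' is unjustified when $z\in D$; the Perron--Bremermann envelope of a function equal to $-2C$ on a boundary neighbourhood of $F\cap\partial D$ can be very negative at interior points.

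In short, what you are effectively trying to do is reprove Wikstr\"om's theorem by hand, and the sketch does not accomplish that. The clean fix is to cite \cite[Theorem 4.1]{Wi1} for the decreasing approximation of $w^*$ and then apply Dini on $F$; this replaces the entire second half of your argument.
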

\begin{proof}
Let $a\in K\subset \overline D\setminus \hat A$. Then there exists a negative  $u\in\PSH(D)$  so that $u^*=-\infty$ on $\hat A$ and $u^*(a)>-\infty$. Set $M=\sup\{u^*(z)-u^*(a), z\in  \overline  D \}$.  Then 
$$ w(z)=\frac{u(z)-u^*(a)}{2(|M|+1)} -1/2,\quad \text{ for  $z\in D$},$$
is  plurisubharmonic and $w<0$ on $D$, $w^*|\hat A=-\infty$, $w^*(a)=-1/2.$  
By ~\cite{Wi1}, Theorem 4.1,  we can find a sequence  in $\PSH(D)\cap C(\overline D)$ that decreases to $w^*$ on $\overline D$. In particular there exists in view of Dini's theorem a negative $f_a\in \PSH(D)\cap C(\overline D)$ such that $f_a<-C$ on $F$ and
$f_a(a)\geq w^*(a)=-1/2>-1 $. Then there exists a neighborhood $V_a$ of $a$ so that  $f_a(z)>-1$  for all $z\in V_a$. By compactness  we can find a finite subset  of $I\subset K$ such that $K\subset \cup_{a\in I} V_a$ . Set $\psi_K=\max\{f_a, a\in I\}$ then 
$$\psi_K<0,\ \ \psi_K\in \PSH(D)\cap C(\overline D),  \ \ \psi_K<-C \ \text{ on $F$},\ \ \text{ and } \psi_K>-1\ \text{ on $K$}.$$
\end{proof}

\begin{lemma}\label{lem4.2} Let $D$ be \revi{a domain} in $\C^n$.
Let $A\subset\partial D$ be b-pluripolar and let $ K\subset\partial D\setminus A$ be compact. Then there exists an $L\subset D\setminus\hat A$ such that   \revi{  $K\subset \overline L$ } and $L\cup K$ is compact. 
\end{lemma}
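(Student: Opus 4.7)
The plan is to build $L$ as a countable collection of points in $D\setminus\hat A$ clustering along a countable dense subset of $K$, with distances from the targets shrinking in \emph{both} indices so that $L$ cannot accumulate away from $K$. The key input will be Proposition \ref{Prop3.3}: since $A$ is b-pluripolar, $\hat A\cap D$ is pluripolar, hence has empty interior, so every nonempty open subset of $D$ meets $D\setminus\hat A$.

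First I will fix a countable dense subset $\{w_k\}_{k\in\N}$ of the compact set $K\subset\partial D$. For each pair $(k,j)\in\N\times\N$ the ball $B(w_k,1/(k+j))$ meets $D$ because $w_k\in\partial D$, so I can pick
\[z_k^{(j)}\in\bigl(B(w_k,1/(k+j))\cap D\bigr)\setminus\hat A,\]
and set $L=\{z_k^{(j)}:k,j\in\N\}$. Then $L\subset D\setminus\hat A$ by construction, and for each $k$ one has $z_k^{(j)}\to w_k$ as $j\to\infty$, so $w_k\in\overline L$; density of $\{w_k\}$ in $K$ then forces $K\subset\overline L$.

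The remaining step is to verify that $L\cup K$ is compact. Since $L\cup K\subset\overline D$ is already bounded, I only need closedness, so I take $\{y_n\}\subset L\cup K$ with $y_n\to y\in\overline D$ and aim for $y\in L\cup K$. If infinitely many $y_n$ lie in $K$, compactness of $K$ forces $y\in K$. Otherwise I pass to a subsequence with $y_n=z_{k_n}^{(j_n)}\in L$; if only finitely many values occur, one of them equals $y$ and $y\in L$, while if the values are pairwise distinct then, since only finitely many pairs $(k,j)$ satisfy $k+j\le M$ for each $M$, one must have $k_n+j_n\to\infty$, forcing $|z_{k_n}^{(j_n)}-w_{k_n}|<1/(k_n+j_n)\to 0$; extracting a further subsequence with $w_{k_n}\to w\in K$ yields $y=w\in K$. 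The main point requiring care is exactly this last compactness verification, and the trick is the weight $1/(k+j)$ depending on both indices: with the more naive bound $1/j$ the first-generation points $\{z_k^{(1)}\}_k$ could cluster at arbitrary points of $\overline D$ and destroy closedness.
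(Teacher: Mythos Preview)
Your proof is correct and follows essentially the same approach as the paper: both invoke Proposition~\ref{Prop3.3} to see that $\hat A\cap D$ has empty interior, then assemble $L$ as a countable set of points in $D\setminus\hat A$ approximating $K$. The paper's construction is marginally simpler in that it uses a finite $1/j$-net of $K$ at each scale $j$ (so only finitely many points are placed at each distance scale and compactness of $L\cup K$ is immediate), whereas you work with a fixed dense sequence and compensate with the $1/(k+j)$ weighting---a perfectly valid variant.
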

\begin{proof} \revi{If $K$ is empty there is nothing to prove.}
As $ K$ is compact there exist for every $j\in \N$ $N_j$ points $z_{jl}\in K$, $1\le l\le N_j$ such that $K\subset \cup_{l=1}^{N_j}B(z_{jl},1/j)$.
\revi{Because of Proposition \ref{Prop3.3}  $\hat A$ has empty interior and we can find a point $w_{jl}\in (B(z_{jl},1/j)\cap D)\setminus \hat A$.} Now let $L=\{w_{lj}: 1\le l\le N_j, j\in\N\}$. Then the limit points of $L$ belong to $ K$ hence $K\cup L$ and if $z\in K\cap B(z_{lj}, 1/j)$ then $|z-w_{lj}|<2/j$, therefore $z$ is a limit of a subsequence of $L$. 
\end{proof}

\begin{theorem} \label{thm4.3} Let $D$ be a B-regular domain in $\C^n$.
Let $A\subset\partial D$ be b-pluripolar, $F$ an $F_{\sigma }$ set, $G$ a $G_{\delta}$ set in $\overline D$ such that $F\subset \hat A\subset G$. Then there exists an  $ E\subset\overline D$  and a negative function $\psi\in\PSH(D)$  such that  $F\subset E\subset G$, where $E=\{z\in\overline D: \ \psi^*(z)=-\infty\}$.
\end{theorem}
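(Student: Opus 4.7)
The plan is to adapt Zeriahi's construction, using Proposition~\ref{prop4.1} as the main building block and Lemma~\ref{lem4.2} to handle the crucial obstruction at the boundary. Write $F = \bigcup_k F_k$ as an increasing union of compact sets and $G = \bigcap_k G_k$ as a decreasing intersection of open subsets of $\overline D$, and set $K_k = \overline D \setminus G_k$. Each $K_k$ is a compact subset of $\overline D \setminus \hat A$ and $K_k \nearrow \overline D \setminus G$. The naive attempt is to apply Proposition~\ref{prop4.1} to the pair $(F_k, K_k)$ with constant $C = 2^k$, obtaining $\psi_k \in \PSH(D) \cap C(\overline D)$ with $\psi_k < 0$ on $\overline D$, $\psi_k < -2^k$ on $F_k$ and $\psi_k > -1$ on $K_k$, and to set $\psi = \sum_k 2^{-k} \psi_k$.

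This $\psi$ is plurisubharmonic and negative on $D$ as a decreasing limit of negative continuous plurisubharmonic functions. The behaviour on $F$ is immediate: for $z \in F_j$ and $k \ge j$ one has $2^{-k}\psi_k(z) < -1$, so $\sum_k 2^{-k}\psi_k(z) = -\infty$. Combining this with the Fatou-type upper bound $\psi^*(z) \le \sum_k 2^{-k}\psi_k(z)$ valid on all of $\overline D$---which follows by truncating the nonpositive tail and using the continuity of each $\psi_k$---forces $\psi^* \equiv -\infty$ on $F$. Symmetrically, for $z \in K_j \cap D$ the lower bound $\psi(z) \ge -\sum_{k<j} 2^{-k}\sup_{\overline D}|\psi_k| - 1$ yields $\psi^*(z) \ge \psi(z) > -\infty$.

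The main obstacle is the behaviour of $\psi^*$ at a point $z \in K_j \cap \partial D$: the interior lower bound is only available at points of $D$ that lie in some $K_m$, and $K_j \cap D$ may well fail to accumulate at $z$. The remedy is to enlarge $K_k$ via Lemma~\ref{lem4.2}. Since $\hat A \cap \partial D = A$ by Proposition~\ref{Prop3.2}, $K_k \cap \partial D$ is a compact subset of $\partial D \setminus A$, and Lemma~\ref{lem4.2} supplies $L'_k \subset D \setminus \hat A$ with $K_k \cap \partial D \subset \overline{L'_k}$ and $L'_k \cup (K_k \cap \partial D)$ compact; the latter forces $\overline{L'_k} \subset L'_k \cup (K_k \cap \partial D)$. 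Hence $\tilde L_k = K_k \cup \overline{L'_k}$ is compact in $\overline D \setminus \hat A$, still contains $K_k$, and satisfies the crucial density property
\[ \tilde L_k \cap \partial D = K_k \cap \partial D \subset \overline{L'_k} \subset \overline{\tilde L_k \cap D}. \]
After replacing $\tilde L_k$ by $\bigcup_{i\le k}\tilde L_i$ to keep the sequence increasing, I would rerun the construction above with $\tilde L_k$ in place of $K_k$.

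With this modification, every $z \in \tilde L_j \cap \partial D$ is the limit of a sequence $y_n \in \tilde L_j \cap D$, and the lower bound on $\psi|_{\tilde L_j \cap D}$ (derived exactly as in the interior case) yields $\psi^*(z) \ge \lim_n \psi(y_n) > -\infty$. The arguments on $F$ and in the interior of $D$ go through unchanged, so $E = \{z \in \overline D : \psi^*(z) = -\infty\}$ contains $F$ and is disjoint from $\bigcup_j \tilde L_j \supset \overline D \setminus G$, giving $F \subset E \subset G$. Finally $\psi$ is not identically $-\infty$ on $D$ since it is finite on $\tilde L_1 \cap D$; the degenerate case $G = \overline D$ is handled separately by taking any $\psi \in \PSH(D)$ with $\psi^*|_A \equiv -\infty$, which exists by the b-pluripolarity of $A$.
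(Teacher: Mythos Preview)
Your proof is correct and follows essentially the same route as the paper: write $F$ and $\overline D\setminus G$ as increasing unions of compacts, invoke Lemma~\ref{lem4.2} to thicken each boundary compact so that it lies in the closure of its interior part, apply Proposition~\ref{prop4.1} with constants $2^k$, and sum the resulting functions with weights $2^{-k}$. The only cosmetic difference is that the paper first shrinks $G$ (intersecting with $\{f^*=-\infty\}$ for a witness $f$ of b-pluripolarity) to guarantee from the outset that $D\setminus G$ and $\partial D\setminus G$ are non-empty, whereas you treat the degenerate case $G=\overline D$ separately at the end; note in passing that your appeal to ``$\tilde L_1\cap D$'' for non-triviality of $\psi$ should really be to $\tilde L_k\cap D$ for $k$ large enough that $K_k\ne\emptyset$.
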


\begin{proof}\revi{Let $f\in\PSH(D)$ be negative, $f^*|_A=-\infty$ and $f\not\equiv -\infty$. Then $G_0=\{f^*=-\infty\}\subset\overline D$ is a $G_\delta$ containing $\hat A$ with the property that $\partial D\setminus G_0$ and $D\setminus G_0$ are non-empty. Replacing $G$ by $G\cap G_0$, we have $\partial D\setminus G$ and $D\setminus G$ are non-empty.}
Set $F=\cup_j F_j$ where $(F_j)_{j\geq 1}$ is an increasing sequence of compact sets in $\hat A$, and $ \overline D\setminus G= \cup_j \tilde K_j$ where $(\tilde K_j)_j$ is an increasing sequence of  compact sets in $ \overline D\setminus G.$  Applying Lemma \ref{lem4.2} to $\tilde K_j\cap\partial D$, each $\tilde K_j$ can be enlarged to a  compact set $K_j\subset \overline D\setminus \hat A$ with the property that $K_j\cap\partial D\subset \overline{K_j\cap D}$. Replacing $K_{j+1}$ by $K_{j+1}\cup K_j$ if necessary, we can assume $K_j\subset K_{j+1}.$  
By  Proposition \ref{prop4.1} for each $j>0$ there exists  $\psi_j\in \PSH(D)\cap C(\overline D)$ with  
\begin{equation}\label{eq4.3} \psi_j\leq -2^{j}\;\text{ on $F_j$}, \quad \text{ and } \quad \psi_j\geq-1\;\text{ on  $K_j$}.
\end{equation}
The function $\psi=\sum_{j=1}^{\infty}2^{-j}\psi_j$  is negative. For $z\in D\setminus G$ there is $J>0$ so that $z\in K_J$ and we find that  
\begin{equation}\label{eq4.4} \revi{ \psi(z)=\sum_{j=1}^{J}2^{-j}\psi_j(z) +  \sum_{j=1+J}^{\infty}2^{-j}\psi_j(z)\geq \inf_{K_J}     \sum_{j=1}^{J}2^{-j}\psi_j-1>-C_J>-\infty,}
\end{equation}
where $C_J$ depends only on $K_J$, in view of the continuity of the $\psi_j$.
It follows that $\psi$ is plurisubharmonic on $D$ as limit of a decreasing sequence of plurisubharmonic functions. It satisfies $\psi^*\equiv-\infty $ on $ F$ because of \eqref{eq4.3}. Finally if $z\in \partial D\setminus G$, then $z\in \overline{K_j\cap D}$ for some $j$ and by \eqref{eq4.4}  $\psi^*(z)>C_j$,
hence $\psi^*>-\infty$ on $\overline D\setminus G$.
Set $E=\{z\in \overline D, \psi^*(z)=-\infty\}$ then $F\subset E\subset G$.
\end{proof}

\revi{
\begin{Remark} In Proposition \ref{prop4.1} and Theorem \ref{thm4.3} we only used B-regularity to the effect that negative $u\in\PSH(D)$ have the property that $u^*$ can be approximated on $\overline D$ by a decreasing sequence of functions in $\PSH(D)\cap C(\overline D)$. Domains with this \emph{approximation property}  were studied in \cite{Wi1}, where it is shown that B-regular domains and polydiscs have the approximation property. To our knowledge there are no other examples known.
\end{Remark}}

\begin{corollary}\label{cor4.4}
\revi{Let $D$ be a B-regular domain.} Every  b-pluripolar set $A\subset \partial D$ that is a $G_\delta$ as well as an $F_\sigma$ is completely b-pluripolar.
If, moreover, $\hat A$ is a $G_\delta$, then $\hat A=\{z\in \overline D: \psi^*(z)=-\infty\}$. In particular $\hat A\cap D$ is completely pluripolar.
\end{corollary}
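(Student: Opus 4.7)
The plan is to reduce both assertions to applications of Theorem~\ref{thm4.3} via judicious choices of the $F_\sigma$ set $F$ and the $G_\delta$ set $G$, and then read off the conclusions from the location of the singularity set $E=\{\psi^*=-\infty\}$ of the resulting $\psi$.

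For the first statement I would take $F:=A$, which is $F_\sigma$ by hypothesis, and $G:=A\cup D$. To see that $G$ is $G_\delta$ in $\overline D$, write $A=\bigcap_k U_k$ with each $U_k$ open in $\partial D$, extend each $U_k$ to an open $V_k$ in $\overline D$ with $V_k\cap\partial D=U_k$, and verify the identity $A\cup D=\bigcap_k(D\cup V_k)$; each $D\cup V_k$ is open in $\overline D$, so $G$ is indeed $G_\delta$. The inclusion $\hat A\subset G$ is immediate: Proposition~\ref{Prop3.2} (this is where B-regularity of $D$ enters) gives $\hat A\cap\partial D=A$, while $\hat A\cap D\subset D$ is trivial. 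Theorem~\ref{thm4.3} then produces a negative $\psi\in\PSH(D)$, $\psi\not\equiv-\infty$, with $A\subset E\subset A\cup D$; intersecting with $\partial D$ forces $E\cap\partial D=A$, which is precisely what complete b-pluripolarity of $A$ demands.

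For the second statement I would instead apply Theorem~\ref{thm4.3} with $F:=A$ and $G:=\hat A$, which is $G_\delta$ by the extra hypothesis. This yields $\psi\in\PSH(D)$, negative, $\not\equiv-\infty$, with $A\subset E\subset\hat A$. The decisive observation is that this very $\psi$ is admissible in the family defining the b-pluripolar hull (it is negative, not identically $-\infty$, and satisfies $\psi^*|_A=-\infty$ since $A\subset E$), so the definition of $\hat A$ forces $\psi^*\equiv-\infty$ on $\hat A$, i.e., $\hat A\subset E$. Combined with the reverse inclusion, $\hat A=E=\{z\in\overline D:\psi^*(z)=-\infty\}$. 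The final assertion is then immediate: $\psi$ is upper semicontinuous on $D$, so $\psi=\psi^*$ there, and $\hat A\cap D=\{z\in D:\psi(z)=-\infty\}$ exhibits $\hat A\cap D$ as the $-\infty$ locus of a single plurisubharmonic function on $D$, i.e., completely pluripolar in the classical sense of Zeriahi. The only non-routine step in this plan is the topological verification that $A\cup D$ is $G_\delta$ in $\overline D$; all the analytic substance has already been absorbed into Theorem~\ref{thm4.3}.
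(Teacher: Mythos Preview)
Your proof is correct and follows essentially the same route as the paper: apply Theorem~\ref{thm4.3} first with $F=A$, $G=A\cup D$ (using Proposition~\ref{Prop3.2} to justify $\hat A\subset G$), then with $F=A$, $G=\hat A$, and close the second inclusion by noting that $\psi$ itself witnesses the hull. You supply more detail than the paper on why $A\cup D$ is $G_\delta$ in $\overline D$ and on why $\psi=\psi^*$ inside $D$; the paper, conversely, is slightly more explicit about why $\psi\not\equiv-\infty$ (finite boundary values on $\partial D\setminus A$), which you assert as part of the output of Theorem~\ref{thm4.3} but which indeed follows since $E\subset G\subsetneq\overline D$.
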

\begin{proof}
By Proposition \ref{Prop3.2} $\hat A\cap \partial D= A$. We apply Theorem \ref{thm4.3} with $F=A$ and $G=A\cup D$. The theorem gives us a negative $\psi\in \PSH(D)$ with $A=\{z\in \partial D \text{ with } \psi^*(z)=-\infty\}$. In particular, $\psi\not\equiv-\infty$ on $D$, \revi{because it has finite boundary values  on $\partial D\setminus A$, and hence $A$ is completely b-pluripolar. 

If $\hat A$ is moreover a $G_\delta$, we apply the Theorem with $F=A$, $G=\hat A$ and obtain a function $\psi$ such that 
\begin{equation}\label{Dji} A\subset \{\psi^*=-\infty\}\subset \hat A.\end{equation}
Now $\psi^*|_A=-\infty$ implies $\hat A\subset \{\psi^*=-\infty\}$, hence the last inclusion in \eqref{Dji} is an equality.}
\end{proof}


\begin{thebibliography}{99}
\bibitem{ArGa} \revi{D.H. Armitage and S.J. Gardiner, \textit{Classical Potential Theory}, Springer Monographs in Mathematics. Springer-Verlag London, Ltd., London, 2001.}
\bibitem{BeTa1} E. Bedford and B.A. Taylor, \textit{The Dirichlet problem for a complex Monge-Amp\` ere equation} Invent. Math. \textbf{37} (1976), no. 1, 1--44.
\bibitem{EdSi1}\revi{A. Edigarian and R. Sigurdsson,  \textit{The Relative Extremal Function  for Borel Sets in Complex Manifolds}, arXiv:math/0607313v1 [math.CV] .}
\bibitem {EdSi}A. Edigarian and R. Sigurdsson,  \textit{Relative Extremal Function and characterization of pluripolar sets in complex manifolds}, 
Trans. Amer. Math. Soc. \textbf{362} (2010), no. 10, 5321--5331. 
\bibitem{JaPf} \revi{M. Jarnicki and P. Pflug,\textit{
Separately analytic functions}. } EMS Tracts in Mathematics, 16. European Mathematical Society (EMS), Zürich, 2011.
EMS Tracts in Mathematics, 16. European Mathematical Society (EMS), Zürich, 2011.
\bibitem{Jo} B. Josefson, \textit{On the equivalence between locally polar and globally polar sets for plurisubharmonic functions on $\C^n$}. Ark. Mat. \textbf{ 16} (1978), no. 1, 109--115. 
\bibitem {LePo}N. Levenberg and  E. A. Poletsky,  \textit{Pluripolar Hulls}, Michigan Math. J. \textbf{ 46}  (1999) 151--162
\bibitem {Po} E. A. Poletsky,  \textit{Holomorphic currents}, Indiana Univ. Math. J., \textbf{42} (1993),  85--144.
\bibitem{Ru} W. Rudin, \textit{Function theory in polydiscs},  W. A. Benjamin, Inc., New York-Amsterdam 1969.
\bibitem {Sa} A. Sadullaev, \textit{Plurisubharmonic measures and capacities on complex manifolds}, (Russian) Uspekhi Mat. Nauk \textbf{36} (1981), no. 4, 53--105. English translation in \textit{Russian Math. Surveys}, \textbf{36} (1981), no. 4, 61--119.
\bibitem {Sib}N. Sibony,  \textit{Une classe de domaines pseudoconvexes}, Duke Math. J. \textbf{55} (1987), 299--319.
\bibitem {Si} J. Siciak, \textit{On some extremal functions and their applications in the theory of analytic functions of several complex variables}, Trans. Amer. Math. Soc. \textbf{105} (1962), 322--357.
\bibitem{Tho} \revi{J. Thorbi\"ornson, \textit{Three Papers on Extremal Plurisubharmonic Functions} Thesis, Ume\aa, 1989.}
\bibitem{Wi} F. Wikstr\"om,  \textit{Jensen measures, duality and pluricomplex Green functions} Thesis, Ume\aa, 1999.
\bibitem {Wi1}F. Wikstr\"om,  \textit{Jensen measure and boundary values of plurisubharmonic functions}, Ark. Mat., \textbf{39} (2001), 181--200.
\bibitem {Ze} A. Zeriahi,  \textit{Ensembles pluripolaires exceptionnels pour la croissance partielle des fonctions holomorphes}, Ann. Polon Math. \textbf{50}  (1989), no.1, 81--91.




 \end{thebibliography}
\end{document}